\newtheorem{theorem}{Theorem}
\newtheorem{lemma}[theorem]{Lemma}
\def\ps@pprintTitle{%
\let\@oddhead\@empty
\let\@evenhead\@empty
\def\@oddfoot{\reset@font\hfil\thepage\hfil}
\let\@evenfoot\@oddfoot
}
\begin{document}

\begin{frontmatter}

\title{A Mathematical and Optimal Control Model for Rabies Transmission Dynamics Among Humans and Dogs with Environmental Effects}

\author[mysecondaryaddress,mymainaddress]{Mfano Charles}\corref{mycorrespondingauthor}
\cortext[mycorrespondingauthor]{Corresponding author.}
\ead{mfanoc@nm-aist.ac.tz}

\author[comymainaddress]{Sayoki G. Mfinanga}
\ead{gsmfinanga@yahoo.com}

\author[mysecondaryaddress]{G.A. Lyakurwa}
\ead{geminpeter.lyakurwa@nm-aist.ac.tz}

\author[pt]{Delfim F. M. Torres}
\ead{delfim@ua.pt}

\author[mysecondaryaddress]{Verdiana G. Masanja}
\ead{verdiana.masanja@nm-aist.ac.tz}

\address[mysecondaryaddress]{School of Computational and Communication Science and Engineering,\\ 
Nelson Mandela African Institution of Science and Technology (NM-AIST), P.O. Box 447, Arusha, Tanzania}

\address[mymainaddress]{Department of ICT and Mathematics, 
College of Business Education (CBE), P.O. BOX 1968, Dar es Salaam, Tanzania}

\address[comymainaddress]{Muhimbili Centre, 
National Institute for Medical Research (NIMR), 
Dar es Salaam, Tanzania}

\address[pt]{Center for Research and Development in Mathematics and Applications (CIDMA),\\
Department of Mathematics, University of Aveiro, 3810-193 Aveiro, Portugal}

% ---------------------------------------

\begin{abstract}
This study presents a deterministic model to investigate rabies transmission dynamics, 
incorporating environmental effects and control strategies using optimal control theory. 
Qualitative and quantitative analyses reveal that the disease-free equilibrium is stable 
when the effective reproduction number \(\mathcal{R}_e < 1\), and unstable when \(\mathcal{R}_e > 1\). 
Mesh and contour plots illustrate an inverse relationship between \(\mathcal{R}_e\) and control 
strategies, including dog vaccination, health promotion, and post-exposure treatment. 
Increased intervention reduces transmission, while higher contact rates among dogs 
raise \(\mathcal{R}_e\). Numerical simulations with optimal control confirm the effectiveness 
of integrated strategies. Vaccination and treatment are identified as key 
interventions for achieving rabies elimination within five years.
\end{abstract}

\begin{keyword}
Rabies  \sep  
Environment \sep 
Contact rate \sep 
Deterrence factors \sep 
Optimal control.
\end{keyword}
\end{frontmatter}

% ---------------------------------------

\section{Introduction}

Rabies is a zoonotic viral disease characterized by nearly 100\% mortality in humans once 
clinical symptoms manifest, rendering it a significant public health threat, particularly 
in resource-limited settings \cite{kumar2023canine,ghai2022continued}. Although rabies 
has been successfully eliminated in parts of the Western Hemisphere through comprehensive 
control programs, legislative action, and sustained vaccination campaigns, the disease remains 
endemic in many parts of the world, particularly in Africa and Asia where the burden 
is disproportionately high \cite{charles2024mathematical, baghi2024debate, fooks2020rabies}. 
In Tanzania, as in many other countries of the Global South, rabies continues to pose a serious 
public health threat. An estimated 1,500 human deaths occur annually, with over 98\% resulting 
from bites by rabid domestic dogs \cite{cleaveland2018proof}. Surveillance data from 2022 
documented more than 26,000 reported bite incidents, with notable clusters in high transmission 
areas such as Songea, Mkuranga, and Kigoma. These persistent hotspots emphasize the need for 
intensified and sustained control interventions \cite{minja2023}.
 
One such intervention a coordinated \textit{One Health} mass dog vaccination campaign conducted 
in Moshi District in 2018 achieved 74.5\% coverage, reaching nearly 30,000 dogs at an estimated 
cost of just US\$1.61 per animal (equivalent to US\$0.067 per person protected). This initiative 
was associated with the complete elimination of reported human and animal rabies cases in the 
district by April 2019. Comparable modelling studies in regions such as Ngorongoro and Serengeti 
corroborate that achieving annual vaccination coverage of at least 70\% is both cost-effective 
and critical for interrupting transmission. Nevertheless, post-exposure prophylaxis (PEP) access 
remains a significant challenge. In Moshi, only 65\% of bite victims sought PEP, while studies 
across northern and southern Tanzania indicate that 20--30\% of exposed individuals failed to initiate 
treatment. Barriers such as the high cost of treatment (approximately US\$100 per full regimen), 
inconsistent availability, and logistical delays hinder effective response. 

The global health community has set the 2030 ambitious 
plan to eradicate dog-mediated rabies  \cite{baghi2024debate,rupprecht2022rabies}. 
However, achieving this objective necessitates addressing significant challenges, 
including limited awareness, vaccine availability, and dog population management 
\cite{akinsulie2024holistic}. Notably, free-roaming or stray dogs, which are 
difficult to vaccinate, can perpetuate transmission cycles even in communities 
where owned dogs are vaccinated \cite{velasco2017successful}. Inadequate healthcare 
services and under-reporting exacerbate the issue. Effectively addressing these 
challenges requires moving beyond vaccination and comprehending the interaction 
between humans, dogs, and their environment by integrating strategies necessary 
to disrupt the transmission cycle between dogs and humans. A pivotal strategy 
is mass dog vaccination, aiming to reach the threshold of 70\% vaccination coverage 
within the dog population in areas where rabies is prevalent 
\cite{odero2022estimates,hampson2015estimating}. Public awareness campaigns 
are also essential for educating communities about rabies 
prevention and responsible pet ownership, thereby enhancing participation in 
vaccination programs. Enhancing healthcare provision, particularly preventive treatment 
after exposure, which is vital for preventing rabies in humans following a bite. 
Additionally, effective dog population management, improved surveillance and 
reporting systems, increased funding, and political commitment are critical 
for sustaining rabies control initiatives and advancing the elimination endeavour 
\cite{changalucha2019need}. For comprehending the dynamics of infectious diseases, 
and its application in rabies research, mathematical models have been instrumental 
in revealing insights into transmission patterns and the impact of control measures   
also emphasize the need for targeted human-dog contact rates, and environmental 
factors such as urban versus rural settings. A nonlinear mathematical model proposed 
by \cite{bornaa2020mathematical} to study the dynamics of rabies infection among dogs 
and in a human population that is exposed to dog bites, revealed that it is possible 
to control rabies disease infection through reducing contacts of humans to infected dogs 
and dogs to infected dogs, increasing immunisation of dogs, and culling of infected dogs.   
In \cite{hudson2019modelling}, the most effective rabies vaccination strategies for domestic 
dog populations with heterogeneous roaming patterns is investigated; in the study 
a rabies-spread model was used to simulate a potential outbreak and evaluate various 
disease control strategies where 27 vaccination strategies were employed based on a 
complete block design of 50\%, 70\%, and 90\% in four sampled population structure.  
Simulation scenarios provided some theoretical evidence that targeting subpopulations 
of dogs for vaccination, based on the roaming behaviour of dogs, is more effective 
and efficient than recommending  70\% vaccination campaigns. 
In \cite{abdulmajid2021analysis}, an analysis of time-delayed rabies model 
in human and dog populations with controls reveals that time delay influences 
the endemicity of rabies; it was also observed that in dog populations, 
vaccination and the birth of puppies are equally effective measures for rabies control. 
The work \cite{hailemichael2022effect} uses the SEIR model (Susceptible, Exposed, Infection, and Recovered)  
to study the effect of vaccination and culling on the dynamics of rabies transmission 
from stray dogs to domestic dogs.  The model was constructed by dividing the dog population 
into two categories: stray dogs and domestic dogs. The results of analytical and numerical 
studies demonstrated that the annual dog birth rate is a critical factor 
in influencing the occurrence of rabies. 

Despite the existing body of study, critical factors such as the presence of multiple 
rabies hosts including free-roaming dogs, domestic dogs, and environmental contributors 
have often been overlooked. This gap highlights the urgent need for more comprehensive 
approaches to rabies control. Motivated by the pressing demand to enhance strategies 
for managing both emerging and re-emerging rabies outbreaks, this study employs mathematical 
modeling to optimize intervention efforts, particularly in high-risk areas. It emphasizes 
the integration of environmental and deterrent factors, which are frequently 
underrepresented in traditional control theory frameworks.

% ---------------------------------------
	
\section{Model formulation}
\label{modelFormulation}

A mathematical modeling approach to examine rabies transmission among humans, 
free-ranging, and domestic dogs was put forward by \cite{charles2024mathematical}. 
Here we administer time-dependent control variables to that model 
to assess their efficacy in mitigating the transmission of rabies disease 
in both humans and dogs. The selection of these variables is guided by 
strategic interventions aimed at decreasing the likelihood of rabies infections 
in host population, as well as minimizing the shedding rate of rabies virus 
by dogs within the environment, including both domestic and free-range populations.  
To achieve optimal control over rabies transmission in humans, domesticated, 
and free-roaming dogs, we integrate four distinct time-dependent control 
variables that are meticulously tailored to address the intricacies of this 
complex epidemiological scenario: $u_1(t)$ measures the effect of promoting 
good health practices and management (surveillance and monitoring) 
in reducing the possibility of human and dog infections with rabies; 
$u_2(t)$ represents the control effort due to vaccination of domestic dogs; 
$u_3(t)$ measures the impact of educating communities about rabies related 
risks and preventive measures against exposure (public awareness and education);  
$u_4(t)$ represents treatment efforts for individuals exposed to suspected rabid 
animals through bites or scratches (Post-Exposure Prophylaxis-PEP).  
The model of the transmission dynamics of rabies disease in human, 
free-range dogs, domestic dogs and environment 
is developed based on the following assumptions:
\begin{itemize}
\item The transmission is due to adequate contact between 
the susceptible individual and the infectious agent.
\item All infectious individuals are subject to both natural and disease induced mortality.
\item All non-infectious individual  are subject to natural death.
\item There is reduced chance for domestic dog to get the disease due to protection from human being.
\item The recruitment rate for each population is greater than natural mortality.
\item Each population is homogeneously mixed that is each individual 
(human or dogs)  has equal chance of getting the disease.
\item No Post-exposure Prophylaxis (PEP)or Pre-exposure prophylaxis (PREP) 
is applied for free-range dogs.
\item Environment  is any material or object 
that is contaminated by the viruses that cause rabies disease.
\item All individual are recruited at a constant rate.
\item Upon recovery human being and domestic dog attain temporary immunity.
\item Upon exposed, human being and domestic dog  receive PEP.
\end{itemize}
Based on the model formulation and underlying assumptions, the flow diagram presented 
in Figure~\ref{fig:2} delineates the transmission dynamics of rabies by integrating 
the model's structural assumptions, definitions of state variables Table~\ref{tab_1}, 
and parameter specifications.
\begin{figure}[H]
\centering
\includegraphics[scale=0.87]{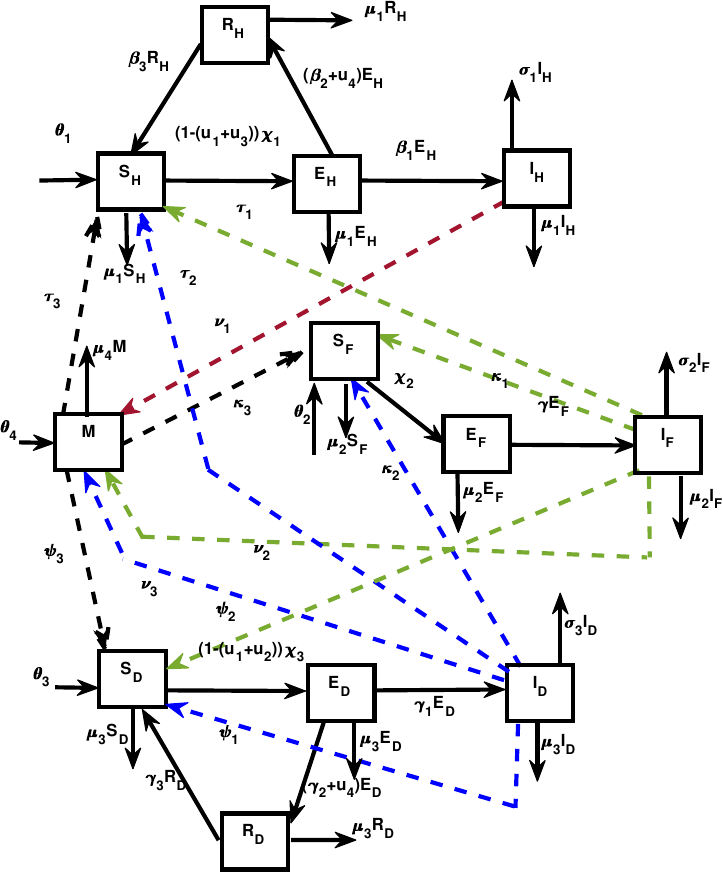}
\caption{\centering Compartmental diagram for rabies transmission with control.}
\label{fig:2}
\end{figure}

\begin{center}
\begin{longtable}{ll}
\caption{\centering Model state variables and their descriptions}\\
\hline 
\textbf{Variable} & \textbf{Description} \\
\hline\hline
\endfirsthead
\multicolumn{2}{c}
{\tablename\ \thetable\ -- \textit{Continued from previous page}} \\
\hline
\textbf{Variable}&\textbf{Description}\\[0.5ex] 
\hline
\endhead
\hline \multicolumn{2}{r}{\textit{Continued on next page}} \\
\endfoot
\hline
\endlastfoot
$S_{H}\left(t\right)$ & susceptible Human at time $t$ \\
$E_{H}\left(t\right)$ &  Exposed Human at time $t$  \\
$I_{H}\left(t\right)$ &  Infected  Human at time $t$  \\
$R_{H}\left(t\right)$ &   Recovered Human at time $t$  \\
$S_{F}\left(t\right)$ &  Susceptible  free- range dog at time $t$  \\
$E_{F}\left(t\right)$ &  Exposed  free-range dog at time $t$  \\
$I_{F}\left(t\right)$ &  Infected  free-range dog at time $t$  \\
$S_{D}\left(t\right)$ &  Susceptible domestic dog at time $t$  \\
$E_{D}\left(t\right)$ &  Exposed domestic dog at time $t$  \\
$I_{D}\left(t\right)$ &  Infected domestic dog at time $t$  \\
$R_{D}\left(t\right)$ &  Recovered domestic dog at time $t$\\
$M\left(t\right)$ &  Number of rabies virus  per unit volume in the  environment  at time $t$  				
\label{tab_1}
\end{longtable}
\end{center}
The dynamics  of  rabies is summarized by the following 
system of ordinary differential equation (ODEs):
\begin{equation}
\left\{
\begin{array}{llll}
\dot{S}_{H}=\theta_{1}+\beta_{3}R_{H}-\mu_{1} S_{H}
-\left(1-\left(u_1+u_3\right)\right)\left(\tau_{1}I_{F} 
+ \tau_{2}I_{D} + \tau_{3} \lambda(M)\right)S_{H},\\
\dot{E}_{H}=\left(1-\left(u_1+u_3\right)\right)
\left(\tau_{1}I_{F} + \tau_{2}I_{D} + \tau_{3} \lambda(M)\right)S_{H}
-\left(\mu_{1}+\beta_{1}+\beta_{2}+u_{4}\right)E_{H},\\
\dot{I}_{H}=\beta_{1}E_{H}-\left(\sigma_{1}+\mu_{1}\right) I_{H},\\
\dot{R}_{H}=\left(\beta_{2}+u_4\right) E_{H}
-\left(\beta_{3}+\mu_{1} \right) R_{H},\\\\
\dot{S}_{F}=\theta_{2}-\left(\kappa_{1}I_{F} + \kappa_{2}I_{D} 
+ \kappa_{3} \lambda(M)\right)S_{F}-\mu_{2}S_{F},\\
\dot{E}_{F}=\left(\kappa_{1}I_{F} + \kappa_{2}I_{D} 
+ \kappa_{3} \lambda(M)\right)S_{F}-\left(\mu_{2}+\gamma\right)E_{F},\\
\dot{I}_{F}=\gamma E_{F}-\left(\mu_{2}+\sigma_{2}\right)I_{F},\\\\
\dot{S}_{D}=\theta_{3}-\mu_{3}S_{D}-\left(1-\left(u_1+u_2\right)\right)
\left(\dfrac{\psi_{1}I_{F}}{1+\rho_{1}} + \dfrac{\psi_{2}I_{D}}{1+\rho_{2}} 
+ \dfrac{\psi_{3}}{1+\rho_{3}}\lambda(M)\right) S_{D}+\gamma_{3}R_{D},\\
\dot{E}_{D}=\left(1-\left(u_1+u_2\right)\right)\left(\dfrac{
\psi_{1}I_{F}}{1+\rho_{1}} + \dfrac{\psi_{2}I_{D}}{1+\rho_{2}} 
+ \dfrac{\psi_{3}}{1+\rho_{3}}\lambda(M)\right) S_{D}
-\left(\mu_{3}+\gamma_{1}+\gamma_{2}+u_{4}\right) E_{D},\\
\dot{I}_{D}=\gamma_{1}E_{D}-\left(\mu_{3}+\sigma_{3}\right) I_{D},\\
\dot{R}_{D}=\left(\gamma_{2}+u_4\right)E_{D}
-\left(\mu_{3}+\gamma_{3}\right)R_{D},\\\\
\dot{M}=\left(\nu_1I_H+\nu_2I_F+\nu_3I_D\right)-\mu_4M,
\end{array}
\right.
\label{eqn79}
\end{equation}
where
\begin{align*}
\lambda(M) = \dfrac{M}{M+C},
\end{align*}
subject to initial non-negative conditions  
\begin{equation}
\begin{aligned}
&S_{H}(0) > 0, \; E_{H}(0) \geq 0, \; I_{H}(0) \geq 0, \; R_{H}(0) \geq 0, 
\; S_{F}(0) > 0, \; E_{F}(0) \geq 0, \; I_{F}(0) \geq 0, \\
&S_{D}(0) \geq 0, \; E_{D}(0) \geq 0, \; I_{D}(0) \geq 0, \; R_{D}(0) \geq 0. 
\end{aligned}
\label{ic}
\end{equation}

% ---------------------------------------------

\section{Qualitative analysis}
\label{sec:02}

In this section,  we analyze the rabies model \eqref{eqn79} 
by taking consideration of the model's boundedness as well as 
the positivity of the solutions. Stability and the equilibrium 
point are also discussed.

\subsection{Positivity of the solutions and boundedness of the system}
\label{sec:2.2.1}

We begin by proving that the model is well-posed.

\begin{lemma}
All solutions of the system \eqref{eqn79} that  start  in 
$\Omega \subset{\mathbb R }^{12}_{+}$ remain positive for all the time.
\label{theorem:1}
\end{lemma}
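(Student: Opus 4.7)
The plan is to establish positivity compartment-by-compartment using the integrating-factor technique that is standard for SIR-type systems. For each state variable $X \in \{S_H, E_H, I_H, R_H, S_F, E_F, I_F, S_D, E_D, I_D, R_D, M\}$ the corresponding equation of \eqref{eqn79} can be rewritten in the form $\dot{X}(t) = g_X(t) - h_X(t)\, X(t)$, where $g_X(t) \geq 0$ is a source term (a recruitment rate, a forcing from another compartment, or zero) and $h_X(t)$ is bounded along any solution on finite time intervals. Multiplying by the integrating factor $\exp\!\left(\int_{0}^{t} h_X(s)\,ds\right)$ and integrating from $0$ to $t$ yields
\begin{equation*}
X(t) = X(0)\exp\!\left(-\int_0^t h_X(s)\,ds\right) + \int_0^t g_X(\tau)\exp\!\left(-\int_\tau^t h_X(s)\,ds\right)d\tau,
\end{equation*}
from which $X(t) \geq 0$ for every $t \geq 0$ whenever $X(0) \geq 0$, as imposed in \eqref{ic}.

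The order in which I would carry the argument out matters, because the source terms $g_X$ usually involve other compartments. First I would treat the ``pure source'' equations whose $g_X$ is either a constant recruitment or depends only on $M$ and compartments already shown positive: start with $S_H$ (where $g_{S_H} = \theta_1 + \beta_3 R_H$ and $h_{S_H}$ contains the force of infection plus $\mu_1$), noting that on the face $S_H = 0$ we have $\dot{S}_H \geq \theta_1 > 0$, so $S_H$ cannot cross zero; proceed analogously with $S_F$ and $S_D$. Next, with $S_H, S_F, S_D$ non-negative, the exposed classes $E_H, E_F, E_D$ inherit non-negative source terms (forces of infection times a susceptible), so the same integrating-factor estimate applies; then $I_H, I_F, I_D$ (sourced by positive multiples of the exposed compartments), then $R_H, R_D$ (sourced by the exposed compartments), and finally $M$ (sourced by $\nu_1 I_H + \nu_2 I_F + \nu_3 I_D$).

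A subtlety I want to flag in the write-up is the factor $1 - (u_1 + u_3)$ appearing in the $S_H$ and $E_H$ equations (and $1 - (u_1 + u_2)$ in the dog equations): the integrating-factor method needs this coefficient to be non-negative so that $h_{S_H}$ is well-defined as a positive absorption rate. I would therefore make explicit the standing admissibility assumption $0 \leq u_i(t) \leq 1$ with $u_1 + u_3 \leq 1$ and $u_1 + u_2 \leq 1$, which is implicit in the optimal-control formulation to be posed later; under this assumption, and using that $\lambda(M) = M/(M+C) \in [0,1]$ whenever $M \geq 0$, all coefficients $h_X$ are non-negative and locally integrable, so the representation above gives $X(t) \geq 0$ for every $t$ in the maximal interval of existence.

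The main obstacle, and the only step that is not purely mechanical, is the interdependence: the non-negativity of $h_{S_H}$ (through $\lambda(M)$) relies on $M \geq 0$, while the non-negativity of $M$ relies on the infected compartments being non-negative. I would resolve this by a simultaneous argument on the invariant cone: define $t^* = \sup\{t\geq 0 : X(s) \geq 0 \text{ for all } s \in [0,t] \text{ and all } X\}$, assume for contradiction $t^* < \infty$, and observe that some component $X^\star$ must vanish and attempt to become negative at $t^*$; evaluating $\dot{X}^\star(t^*)$ on the boundary (using that all other components are still non-negative by continuity at $t^*$) shows $\dot{X}^\star(t^*) \geq 0$, contradicting the assumption. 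This closes the argument and proves that $\Omega$ is forward invariant for \eqref{eqn79}.
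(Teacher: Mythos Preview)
Your argument is correct and is the standard route to positivity for compartmental systems: rewrite each equation as $\dot X = g_X - h_X X$, use the integrating-factor representation, and close the circular dependence among compartments with a first-exit-time contradiction on the boundary of the non-negative orthant. You also identify the admissibility constraint $u_1+u_3\le 1$ and $u_1+u_2\le 1$ that is needed for $h_X\ge 0$, which the paper leaves implicit.

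The paper, however, does something quite different. Its proof of this lemma does not argue positivity at all in the sense you do: it writes each equation in integral form, isolates the right-hand sides as functions $f_i$, and then verifies a componentwise Lipschitz estimate $\|f_i(t,X)-f_i(t,X_1)\|\le \epsilon_i\|X-X_1\|$, concluding that this ``guarantees the existence of a solution''. In other words, the paper's proof is really an existence/uniqueness argument via Lipschitz continuity, and the positivity claim of the lemma is never directly established there. Your approach therefore differs not just in technique but in what is actually shown: the integrating-factor/boundary-invariance argument you propose genuinely delivers positivity (forward invariance of $\mathbb{R}^{12}_{+}$), whereas the paper's Lipschitz computation yields well-posedness but, as written, does not by itself force trajectories to remain in the non-negative orthant. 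If you want to align with the paper you could add its Lipschitz step as a preliminary existence result and then append your invariance argument; conversely, the paper's version would be strengthened by exactly the boundary analysis you give.
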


\begin{proof}
To establish the existence of a solution for the model given in \eqref{eqn79}, 
we begin by considering the initial conditions in \eqref{ic}. Applying the integral operator 
$\int\limits_{0}^{t} \left(\cdot\right) \, ds$ to each equation in the model \eqref{eqn79}, we derive:
\begin{equation}
\left\{
\begin{array}{llll}
S_{H}\left(t\right)-S_{H}\left(0\right)
=\int\limits_{0}^{t} \left(\theta_{1}+\beta_{3}R_{H}-\mu_{1} S_{H}-\chi_{1}S_{H}\right)ds,\\
E_{H}\left(t\right)-E_{H}\left(0\right)
=\int\limits_{0}^{t}\left(\chi_{1}S_{H}-\left(\mu_{1}+\beta_{1}+\beta_{2}+u_{4}\right)E_{H}\right)ds,\\
I_{H}\left(t\right)-I_{H}\left(0\right)
=\int\limits_{0}^{t}\left(\beta_{1}E_{H}-\left(\sigma_{1}+\mu_{1}\right) I_{H}\right)ds,\\
R_{H}\left(t\right)-R_{H}\left(0\right)
=\int\limits_{0}^{t}\left(\left(\beta_{2}
+u_4\right) E_{H}-\left(\beta_{3}+\mu_{1} \right) R_{H}\right)ds,\\
S_{F}\left(t\right)-S_{F}\left(0\right)
=\int\limits_{0}^{t}\left(\theta_{2}-\chi_{2}S_{F}-\mu_{2}S_{F}\right)ds,\\
E_{F}\left(t\right)-E_{F}\left(0\right)
=\int\limits_{0}^{t}\left(\chi_{2}S_{F}-\left(\mu_{2}+\gamma\right)E_{F}\right)ds,\\
I_{F}\left(t\right)-I_{F}\left(0\right)
=\int\limits_{0}^{t}\left(\gamma E_{F}-\left(\mu_{2}+\sigma_{2}\right)I_{F}\right)ds,\\
S_{D}\left(t\right)-S_{D}\left(0\right)
=\int\limits_{0}^{t}\left(\theta_{3}-\mu_{3}S_{D}-\chi_{3} S_{D}+\gamma_{3}R_{D}\right)ds,\\
E_{D}\left(t\right)-E_{D}\left(0\right)
=\int\limits_{0}^{t}\left(\chi_{3} S_{D}-\left(\mu_{3}+\gamma_{1}+\gamma_{2}+u_{4}\right) E_{D}\right)ds,\\
I_{D}\left(t\right)-I_{D}\left(0\right)
=\int\limits_{0}^{t}\left(\gamma_{1}E_{D}-\left(\mu_{3}+\sigma_{3}\right) I_{D}\right)ds,\\
R_{D}\left(t\right)-R_{D}\left(0\right)
=\int\limits_{0}^{t}\left(\left(\gamma_{2}+u_4\right)E_{D}
-\left(\mu_{3}+\gamma_{3}\right)R_{D}\right)ds,\\
M\left(t\right)-M\left(0\right)
=\int\limits_{0}^{t}\left(\left(\nu_1I_H+\nu_2I_F+\nu_3I_D\right)-\mu_4M\right)ds,
\end{array}
\right.
\label{eqn84}
\end{equation}
where
\begin{align*}
\chi_{1}=\left(1-\left(u_1+u_3\right)\right)
\left(\tau_{1}I_{F} + \tau_{2}I_{D} + \tau_{3} \lambda(M)\right),
\quad \chi_{2}=\left(\kappa_{1}I_{F} + \kappa_{2}I_{D} 
+ \kappa_{3} \lambda(M)\right),\\
\chi_{3}=\left(1-\left(u_1+u_2\right)\right)\left(
\dfrac{\psi_{1}I_{F}}{1+\rho_{1}} + \dfrac{\psi_{2}I_{D}}{1
+\rho_{2}} + \dfrac{\psi_{3}}{1+\rho_{3}}\lambda(M)\right).
\end{align*}
For convenience, from equation \eqref{eqn84} we define the  following functions:
\begin{equation}
\left\{
\begin{array}{llll}
f_1\left(t,\;S_H\right)= \theta_{1}+\beta_{3}R_{H}-\mu_{1} S_{H}-\chi_{1}S_{H},\\
f_2\left(t,\;E_H\right)=\chi_{1}S_{H}-\left(\mu_{1}+\beta_{1}+\beta_{2}+u_{4}\right)E_{H},\\
f_3\left(t,\;E_H\right)=\beta_{1}E_{H}-\left(\sigma_{1}+\mu_{1}\right) I_{H},\\
f_4\left(t,\;R_H\right)=\left(\beta_{2}+u_4\right) E_{H}-\left(\beta_{3}+\mu_{1} \right) R_{H},\\
f_5\left(t,\;S_F\right)=\theta_{2}-\chi_{2}S_{F}-\mu_{2}S_{F},\\
f_6\left(t,\;E_F\right)=\chi_{2}S_{F}-\left(\mu_{2}+\gamma\right)E_{F},\\
f_7\left(t,\;I_F\right)=\gamma E_{F}-\left(\mu_{2}+\sigma_{2}\right)I_{F},\\
f_8\left(t,\;S_D\right)=\theta_{3}-\mu_{3}S_{D}-\chi_{3} S_{D}+\gamma_{3}R_{D},\\
f_9\left(t,\;E_D\right)=\chi_{3} S_{D}-\left(\mu_{3}+\gamma_{1}+\gamma_{2}+u_{4}\right) E_{D},\\
f_{10}\left(t,\;I_D\right)=\gamma_{1}E_{D}-\left(\mu_{3}+\sigma_{3}\right) I_{D},\\
f_{11}\left(t,\;R_D\right)=\left(\gamma_{2}+u_4\right)E_{D}-\left(\mu_{3}+\gamma_{3}\right)R_{D},\\
f_{12}\left(t,\;M\right)=\left(\nu_1I_H+\nu_2I_F+\nu_3I_D\right)-\mu_4M.
\end{array}
\right.
\end{equation} 

Since $\{S_H, E_H, I_H, R_H, S_F, E_F, I_F, S_D, E_D, I_D, R_D, M\} 
\in \Omega \subset \mathbb{R}_{+}^{12},$ and \\
$0 \leq \{S_H, E_H, I_H, R_H, S_F, E_F, I_F, S_D, E_D, I_D, R_D, M\} 
\leq \mathbb{L}_i, \quad i = 1, 2, \ldots, 12$, such that 
\begin{equation}
\begin{split}
\left\Vert f_1(t,\;S_H\left(t\right)) - f_1(t,\;S_H\left(t\right)_{1}) \right\Vert 
&\leq \left\Vert \left(\theta_{1}+\beta_{3}R_{H}\left(t\right)-\mu_{1} 
S_{H}\left(t\right)-\chi_{1}S_{H}\left(t\right)\right) \right. \\
&\qquad - \left. \left(\theta_{1}+\beta_{3}R_{H}\left(t\right)-\mu_{1} 
S_{H}\left(t\right)_{1}-\chi_{1}S_{H}\left(t\right)\right)_{1} \right\Vert\\
&\leq  \mu_{1}\left\Vert  S_{H}\left(t\right)_{1} 
-  S_{H}\left(t\right) \right\Vert\ +\chi_{1}\left\Vert  S_{H}\left(t\right)_{1} 
-  S_{H}\left(t\right) \right\Vert\ \\
&\leq  \left(\mu_{1}+\chi_{1}\right)\left\Vert  S_{H}\left(t\right)_{1} 
-  S_{H}\left(t\right) \right\Vert\ \\ 
&\leq \epsilon_{1}\left\Vert  S_{H}\left(t\right)_{1} 
-  S_{H}\left(t\right) \right\Vert.
\end{split}
\label{eqn85}
\end{equation}
Using a similar approach as in equation \eqref{eqn85}  
for functions $f_i,\;i=2,3, \ldots, 12$, we obtain that
\begin{multline}
\left\{
\begin{aligned}
\left\Vert f_2(t,\;E_H\left(t\right)) - f_2(t,\;E_H\left(t\right)_{1}) 
\right\Vert \leq\epsilon_{2}\left\Vert  E_{H}\left(t\right)_{1} 
-  E_{H}\left(t\right) \right\Vert\ ,\\
\left\Vert f_3(t,\;I_H\left(t\right)) - f_3(t,\;I_H\left(t\right)_{1}) \right\Vert 
\leq\epsilon_{3}\left\Vert  I_{H}\left(t\right)_{1} 
-  I_{H}\left(t\right) \right\Vert\ , \\
\left\Vert f_4(t,\;R_H\left(t\right)) - f_4(t,\;R_H\left(t\right)_{1}) \right\Vert 
\leq\epsilon_{4}\left\Vert  R_{H}\left(t\right)_{1} 
-  R_{H}\left(t\right) \right\Vert\ ,\\
\left\Vert f_5(t,\;S_F\left(t\right)) - f_5(t,\;S_F\left(t\right)_{1}) \right\Vert 
\leq\epsilon_{5}\left\Vert  S_{F}\left(t\right)_{1} 
-  S_{F}\left(t\right) \right\Vert\ , \\
\left\Vert f_6(t,\;E_F\left(t\right)) - f_6(t,\;E_F\left(t\right)_{1}) \right\Vert 
\leq\epsilon_{6}\left\Vert  E_{F}\left(t\right)_{1} 
-  E_{F}\left(t\right) \right\Vert\ , \\
\left\Vert f_7(t,\;I_F\left(t\right)) - f_7(t,\;I_F\left(t\right)_{1}) \right\Vert 
\leq\epsilon_{7}\left\Vert  I_{F}\left(t\right)_{1} 
-  I_{F}\left(t\right) \right\Vert\ , \\
\left\Vert f_8(t,\;S_D\left(t\right)) - f_8(t,\;S_D\left(t\right)_{1}) \right\Vert 
\leq\epsilon_{8}\left\Vert  S_{D}\left(t\right)_{1} 
-  S_{D}\left(t\right) \right\Vert\ , \\
\left\Vert f_9(t,\;E_D\left(t\right)) - f_9(t,\;E_D\left(t\right)_{1}) \right\Vert 
\leq\epsilon_{9}\left\Vert  E_{D}\left(t\right)_{1} 
-  E_{D}\left(t\right) \right\Vert\ , \\
\left\Vert f_{10}(t,\;I_D\left(t\right)) - f_{10}(t,\;I_D\left(t\right)_{1}) \right\Vert 
\leq\epsilon_{10}\left\Vert  I_{D}\left(t\right)_{1} 
-  I_{D}\left(t\right) \right\Vert\ , \\
\left\Vert f_{11}(t,\;S_D\left(t\right)) - f_{11}(t,\;R_D\left(t\right)_{1}) \right\Vert 
\leq\epsilon_{11}\left\Vert  R_{D}\left(t\right)_{1} 
-  R_{D}\left(t\right) \right\Vert\ , \\
\left\Vert f_{12}(t,\;M\left(t\right)) - f_{12}(t,\;M\left(t\right)_{1}) \right\Vert 
\leq\epsilon_{12}\left\Vert  M\left(t\right)_{1} 
-  M\left(t\right) \right\Vert,
\end{aligned}
\right.
\end{multline}
where  $\epsilon_{1}=\mu_{1}+\chi_{1}$, $\epsilon_{2}
=\mu_{1}+\beta_{1}+\beta_{2}+u_{4}$, $\epsilon_{3}=\sigma_{1}+\mu_{1}$,
$\epsilon_{4}=\beta_{3}+\mu_{1}$, $\epsilon_{5}
=\mu_{2}+\chi_{2}$, $\epsilon_{6}=\mu_{2}+\gamma$, $\epsilon_{7}=\mu_{2}+\sigma_{1}$,
$\epsilon_{8}=\mu_{3}+\chi_{3}$, $\epsilon_{9}
=\mu_{3}+\gamma_{1}+\gamma_2+u_4$, $\epsilon_{10}=\mu_{3}+\sigma_{3}$, 
$\epsilon_{11}=\mu_{3}+\gamma_{3}$, $\epsilon_{12}=\mu_{4}$. This satisfy Lipschitz condition 
which guarantees the existence of a solution for the model in \eqref{eqn79}.

\end{proof}

\begin{theorem}
All solutions of system \eqref{eqn79} starting in ${\mathbb{R}}^{12+}$ are uniformly bounded.
\end{theorem}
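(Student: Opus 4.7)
The plan is to prove uniform boundedness by decomposing the twelve‑dimensional state into four natural sub‑populations---humans, free‑range dogs, domestic dogs, and the environmental virus load---and to obtain a linear differential inequality for the total of each sub‑population. First I would introduce the aggregates
\begin{equation*}
N_H = S_H+E_H+I_H+R_H, \qquad N_F = S_F+E_F+I_F, \qquad N_D = S_D+E_D+I_D+R_D,
\end{equation*}
and add, in system \eqref{eqn79}, the equations that make up each aggregate. The internal force‑of‑infection terms (those involving $\chi_1,\chi_2,\chi_3$) cancel, as do the recovery and exposure fluxes, leaving only recruitment, natural mortality and disease‑induced mortality.

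Next, since $\sigma_1 I_H,\sigma_2 I_F,\sigma_3 I_D\ge 0$, I would drop them to produce the linear differential inequalities
\begin{equation*}
\dot N_H\le \theta_1-\mu_1 N_H,\qquad \dot N_F\le \theta_2-\mu_2 N_F,\qquad \dot N_D\le \theta_3-\mu_3 N_D.
\end{equation*}
A standard integrating‑factor (or Gronwall) argument then gives, for $i=H,F,D$ with $(\theta,\mu)\in\{(\theta_1,\mu_1),(\theta_2,\mu_2),(\theta_3,\mu_3)\}$,
\begin{equation*}
N_i(t)\le \frac{\theta}{\mu}+\left(N_i(0)-\frac{\theta}{\mu}\right)e^{-\mu t},
\qquad \limsup_{t\to\infty}N_i(t)\le \frac{\theta}{\mu}.
\end{equation*}
Combined with the non‑negativity established in Lemma \ref{theorem:1}, each individual state variable in $\{S_H,\dots,R_D\}$ is therefore uniformly bounded.

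The last step is the environmental compartment, which is the only coupling that is not self‑contained: $\dot M = \nu_1 I_H+\nu_2 I_F+\nu_3 I_D-\mu_4 M$. Using the bounds $I_H\le \theta_1/\mu_1$, $I_F\le \theta_2/\mu_2$, $I_D\le \theta_3/\mu_3$ already obtained, I would deduce
\begin{equation*}
\dot M\le \frac{\nu_1\theta_1}{\mu_1}+\frac{\nu_2\theta_2}{\mu_2}+\frac{\nu_3\theta_3}{\mu_3}-\mu_4 M,
\end{equation*}
so that $\limsup_{t\to\infty}M(t)\le \mu_4^{-1}\bigl(\nu_1\theta_1/\mu_1+\nu_2\theta_2/\mu_2+\nu_3\theta_3/\mu_3\bigr)$. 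Collecting all bounds gives a positively invariant, compact region $\Omega\subset\mathbb{R}^{12}_+$ into which every trajectory is eventually attracted, which is exactly the uniform boundedness claim.

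The main obstacle---and it is a mild one---is being careful at the cancellation step: the control $u_4$ appears in both the $E_H\to R_H$ flux and the $E_D\to R_D$ flux, and $u_1,u_2,u_3$ appear in the incidences; one has to check that all of these are internal transfers within a single aggregate so they drop out cleanly, leaving only the purely demographic terms $\theta_i-\mu_i N_i$ (minus the disease deaths). Once that bookkeeping is verified, the remaining estimates are standard linear comparisons and pose no analytical difficulty.
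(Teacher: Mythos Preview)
Your proposal is correct and follows essentially the same route as the paper: decompose into the three host aggregates $N_H,N_F,N_D$, exploit cancellation of the internal transfer terms to obtain $\dot N_i\le\theta_i-\mu_i N_i$, solve by an integrating factor, and then bound $M$ using the resulting bounds on $I_H,I_F,I_D$. Your treatment is in fact slightly more explicit than the paper's (you spell out the Gronwall step and flag the bookkeeping for the $u_j$-dependent fluxes), but the structure is identical.
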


\begin{proof}
The model system \eqref{eqn79}  can be divided  
in subsections  as  human population, free range,  
and domestic  dogs  as follows:
\begin{equation}
\begin{split}
\dfrac{d\left(S_H+E_H+I_H+R_H\right)}{dt} 
&= \theta_{1}+\beta_{3}R_{H}-\mu_{1} S_{H}-\left(1-\left(u_1+u_3\right)\right)\chi_{1} \\
&\quad +\left(1-\left(u_1+u_3\right)\right)\chi_{1}-\left(\mu_{1}+\beta_{1}+\beta_{2}+u_{4}\right)E_{H}+\beta_{1}E_{H} \\
&\quad -\left(\sigma_{1}+\mu_{1}\right) I_{H}+\left(\beta_{2}+u_4\right) E_{H}-\left(\beta_{3}+\mu_{1} \right) R_{H}.
\end{split}
\label{eqn83}
\end{equation}
Then, equation $\left(\ref{eqn83}\right)$  becomes
\begin{equation}
\dfrac{dN_H}{dt}= \theta_{1}-\left(S_{H}+E_{H}+I_{H}+R_{H}\right)\mu_{1}-\sigma_{1}I_{H}.
\label{eqn82a}
\end{equation}
\begin{equation}
	\dfrac{dN_H}{dt}= \theta_{1}-\left(N_{H} \right)\mu_{1}-\sigma_{1}I_{H}.
	\label{eqn82b}
\end{equation}
We define the integrating  factor as 
\begin{eqnarray}
N_{H}\left(t\right)=e^{\int\limits_{0}^{t} \mu_{1}dt}=e^{\mu_{1} t},
\label{13}
\end{eqnarray}
for $t\rightarrow 0$. Then,  equation $\left(\ref{13} \right)$ is  simplified  as
\begin{eqnarray}
N_{H}(0)\le\frac{\theta_{1}}{\mu_{1}}+Ce^{0} \rightarrowtail N_{H}(0)-\frac{\theta_{1}}{\mu_{1}}\le C.
\label{16}
\end{eqnarray}
A simple manipulation of equation $\left(\ref{16} \right)$ allow us to write that
\begin{equation}
\Omega_{H} = \left\{ \left(S_{H}, E_{H}, I_{H}, R_{H}\right) 
\in \mathbb{R}_{+}^{4} : 0 \leq S_{H}+ E_{H}+I_{H}+R_H \leq \frac{\theta_{1}}{\mu_{1}} \right\}.
\end{equation}
So, using the same  procedure, it can be concluded  that
\begin{align*}
\Omega_{F} &= \left\{ \left(S_{F}, E_{F}, I_{F}\right) \in \mathbb{R}_{+}^{3} : 
0 \leq S_{F}+ E_{F}+ I_{F} \leq \frac{\theta_{2}}{\mu_{2}} \right\}, \\
\Omega_{D} &= \left\{ \left(S_{D}, E_{D}, I_{D}, R_{D}\right) 
\in \mathbb{R}_{+}^{4} : 0 \leq S_{D}+ E_{D}+ I_{D} +R_{D}
\leq \frac{\theta_{3}}{\mu_{3}} \right\}, \\
M\left(t\right) &\leq \Omega_{M} = \max \left\{ \frac{\theta_1 \nu_1}{\mu_1\mu_4}
+\frac{\theta_2 \nu_2}{\mu_2\mu_4}+\frac{\theta_3 \nu_3}{\mu_3\mu_4},M\left(0\right) \right\},
\end{align*}
and the solution is biologically and mathematically meaningfully.
\end{proof}

% ----------------

\subsection{Rabies free equilibrium point $E^{0}$ and the effective reproduction rabies number $\mathcal{R}_e$}
\label{sec:2.2.2}

To obtain the rabies disease free equilibrium point $E^{0}$, 
the left-hand-side of equation 
in the model system  \eqref{eqn79}  is set to zero, such that
\begin{align*}
E^{0} &= \left(
\begin{array}{cccccccccccc}
\dfrac{\theta_{1}}{\mu_{1}}, & 0, & 0, & 0, & \dfrac{\theta_{2}}{\mu_{2}}, 
& 0, & 0, & \dfrac{\theta_{3}}{\mu_{3}}, & 0, & 0, & 0, & 0
\end{array}\right).
\end{align*}
The  $\mathcal{R}_e$ is obtained  by 
\begin{equation}
FV^{-1}=\left[\dfrac{\partial \mathcal{F}_{i}\left(E^{0}\right)}{\partial t}\right]
\left[\dfrac{\partial \mathcal{V}_{i}\left(E^{0}\right)}{\partial t}\right]^{-1},
\end{equation}
where 
\begin{equation*}
{\mathcal{F}}_{i} =
\begin{pmatrix}
\left(1-\left(u_{1}+u_{3}\right)\right) \left(\tau_{1}I_{F}
+\tau_{2}I_{D}+\tau_{3} \lambda \left(M\right)\right)S_{H} \\
0 \\
\left(\kappa_{1}I_{F}+\kappa_{2} I_{D}+\kappa_{3} \lambda \left(M\right)\right)S_{F} \\
0 \\
\left(1-\left(u_{1}+u_{2}\right)\right)\left(\dfrac{\psi_{1}I_{F}}{1+\rho_{1}}
+\dfrac{\psi_{2}I_{D}}{1+\rho_{2}}+\dfrac{\psi_{3}}{1+\rho_{3}}\lambda \left(M\right)\right) S_{D} \\
0 \\
0
\end{pmatrix},\\
\mathcal{V}_{i} =
\begin{pmatrix}
\left(\mu_{1}+\beta_{1}+\beta_{2}+u_{4}\right)E_{H} \\
\left(\sigma_{1}+\mu_{1}\right)I_{H}-\beta_{1}E_{H} \\
\left(\mu_{2}+\gamma\right)E_{F} \\
\left(\mu_{2}+\sigma_{2}\right)I_{F}- \gamma E_{F} \\
\left(\mu_{3}+\gamma_{1}+\gamma_{2}+u_{4}\right) E_{D} \\
\left(\mu_{3}+\delta_{3}\right) I_{D}-\gamma_{1}E_{D} \\
\mu_4M-\left(\nu_1I_H+\nu_2I_F+\nu_3I_D\right)
\end{pmatrix}.
\label{M}
\end{equation*}
The Jacobian matrices  of  $\mathcal{F}_{i}$ and  $\mathcal{V}_{i}$ at $E^{0}$, 
provide  $F$ and $V$ as follows:
\begin{eqnarray}
\frac{\partial\mathcal{F}_{i}}{\partial y_{j}}\big\vert_{E^{0}}=F=
\left(
\begin{array}{ccccccc}
0&0&0&\frac{\left(1-u_{1}-u_{3}\right)\tau_{1}\theta_{1}}{\mu_{1}}
&0&\frac{\left(1-u_{1}-u_{3}\right)\tau_{2}\theta_{1}}{\mu_{1}}&0\cr
0&0&0&0&0&0&0\cr
0&0&0&\frac{\kappa_{1}\theta_{2}}{\mu_{2}}&0&\frac{\kappa_{2}\theta_{2}}{\mu_{2}}&0\cr
0&0&0&0&0&0&0\cr
0&0&0&\frac{\left(1-u_{1}-u_{2}\right)\psi_{1}\theta_{3}}{\left(1+\rho_{1}\right)\mu_{3}}
&0&\frac{\left(1-u_{1}-u_{2}\right)\psi_{2}\theta_{3}}{\left(1+\rho_{2}\right)\mu_{3}}&0\cr
0&0&0&0&0&0&0\cr
0&0&0&0&0&0&0
\end{array}
\right),
\label{job1}
\end{eqnarray} 
\begin{eqnarray}
\frac{\partial\mathcal{V}_{i}}{\partial y_{j}}\big\vert_{E^{0}}
=V=
\setlength{\arraycolsep}{1.5pt}
\left(
\begin{array}{ccccccc}
\mu_{1}+\beta_{1}+\beta_{2}+u_{4}&0&0&0&0&0&0\cr
-\beta_{1}&\sigma_{1}+\mu_{1}&0&0&0&0&0\cr
0&0&\mu_{2}+\gamma&0&0&0&0\cr
0&0&-\gamma &\mu_{2}+\sigma_{2}&0&0&0\cr
0&0&0&0&\mu_3+\gamma_{1}+\gamma_{2}+u_{4}&0&0\cr
0&0&0&0&-\gamma_{1}&\mu_{3}+\sigma_{3}&0\cr
0&-\nu_{1}&0&-\nu_{2}&0&-\nu_{3}&\mu_{4}
\end{array}
\right).
\label{jobi2}
\end{eqnarray} 
Thus, the maximum of the eigenvalues  of \eqref{eqn79} 
gives  the effective  rabies  production number as
\begin{equation}
\mathcal{R}_e= \rho \left(FV^{-1}\right)=\dfrac{R_{33}+R_{21}
+\sqrt{{R_{{21}}}^{2}-2\,R_{{33}}R_{{21}}+4\,R_{{31}}R_{{23}}+{R_{{33}}}^{2}}}{2}
\end{equation}  
with
\begin{align*}
R_{{21}} = \frac{\kappa_{{1}}\theta_{{2}}\gamma}{\mu_{{2}} (\mu_{{2}}+\gamma)(\sigma_{{2}}+\mu_{{2}})}, \;\;
R_{{23}} = \frac{\kappa_{{2}}\theta_{{2}}a_{{3}}}{\mu_{{2}}}, \;\;
a_{3} = {\frac {\gamma}{ \left( \mu_{{3}}+\gamma_{{1}}+\gamma_{{2}}+u_{{4}}\right)  
\left( \sigma_{{3}}+\mu_{{3}} \right) }}, \\
R_{{31}} = \frac{(1-u_{{1}}-u_{{2}})\psi_{{1}}\theta_{{3}}\gamma}{(1
+\rho_{{1}})\mu_{{3}} (\mu_{{2}}+\gamma)(\sigma_{{2}}+\mu_{{2}})}, \;
R_{{33}} = \frac{(1-u_{{1}}-u_{{2}})\psi_{{2}}\theta_{{3}}a_{{3}}}{(1+\rho_{{2}})\mu_{{3}}}.
\end{align*}

% ----------------

\subsection{Global Stability of rabies \(E^{0}\)}
\label{subsec:GS:DFE}

The equilibrium behavior of the model at \(E^{0}\), as defined in equation~\eqref{eqn79}, 
is analyzed using the Metzler matrix framework, following the approaches 
in~\cite{charles2024mathematical,castillo2002computation}.

\begin{theorem}
\label{Th2}	
The rabies  \(E^{0}\) is  globally  
asymptotically stable  when \(\mathcal{R}_{0}<1\) and unstable otherwise.
\end{theorem}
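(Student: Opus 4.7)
The plan is to apply the Castillo-Chavez--Song theorem for global stability of the disease-free equilibrium, using the Metzler matrix framework the authors have cited. I would first split the state vector into an uninfected block $X=(S_H,R_H,S_F,S_D,R_D)$ and an infected block $Z=(E_H,I_H,E_F,I_F,E_D,I_D,M)$, and rewrite system \eqref{eqn79} as $\dot{X}=F(X,Z)$, $\dot{Z}=G(X,Z)$ with $G(X,0)=0$, which is automatic because every infected equation contains only products of infected variables or decay of infected variables.

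Next I would verify hypothesis (H1): at $Z=0$ the uninfected subsystem reduces to linear cascade equations whose unique equilibrium $X^{*}=(\theta_1/\mu_1,0,\theta_2/\mu_2,\theta_3/\mu_3,0)$ is globally asymptotically stable, since each uninfected compartment decays at rate $\mu_i$ toward a constant recruitment, and $R_H,R_D$ have zero forcing in the absence of exposure. Then I would compute the Jacobian $A=D_Z G(X^{*},0)$ of the infected subsystem at the DFE. Because the only bilinear infection terms are of the form (force of infection)$\times$(susceptible), evaluating at $Z=0$ collapses to $A = F-V$ with $F,V$ already displayed in \eqref{job1}--\eqref{jobi2}. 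The off-diagonal entries of $A$ are nonnegative (they come from the transfer rates $\beta_1,\gamma,\gamma_1,\nu_1,\nu_2,\nu_3$ and the linearized force-of-infection coefficients), so $A$ is Metzler.

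The main obstacle is hypothesis (H2): writing $G(X,Z)=AZ-\hat{G}(X,Z)$ and showing $\hat{G}(X,Z)\ge 0$ on the biologically feasible compact set $\Omega=\Omega_H\times\Omega_F\times\Omega_D\times[0,\Omega_M]$ obtained in the boundedness theorem. The nontrivial entries of $\hat{G}$ take the form
\begin{equation*}
\bigl(1-(u_1+u_3)\bigr)\bigl(\tau_{1}I_{F}+\tau_{2}I_{D}+\tau_{3}\lambda(M)\bigr)\bigl(S_H^{*}-S_H\bigr),
\end{equation*}
and analogously for the free-range and domestic dog infection channels with $S_F^{*}-S_F$ and $S_D^{*}-S_D$. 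These are nonnegative precisely because on $\Omega$ one has $S_H\le\theta_1/\mu_1=S_H^{*}$, $S_F\le\theta_2/\mu_2=S_F^{*}$, $S_D\le\theta_3/\mu_3=S_D^{*}$, while the controls satisfy the standing feasibility constraints $u_1+u_3\le 1$ and $u_1+u_2\le 1$; the saturating term $\lambda(M)=M/(M+C)$ is likewise nonnegative. I would treat the verification that every term of $\hat{G}$ keeps the correct sign (and that no residual bilinear piece has been overlooked in the $M$-component) as the delicate bookkeeping step of the proof.

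With (H1) and (H2) established, the Castillo-Chavez theorem yields that $E^{0}$ is globally asymptotically stable whenever the spectral bound of $A=F-V$ is negative, which is equivalent to $\mathcal{R}_e<1$ by the next-generation correspondence $\rho(FV^{-1})<1\Leftrightarrow s(F-V)<0$ for Metzler $-V$. Finally, for instability when $\mathcal{R}_e>1$ I would invoke the classical van den Driessche--Watmough result: if $\rho(FV^{-1})>1$ then $F-V$ has an eigenvalue with positive real part, so the DFE is unstable. (The statement writes $\mathcal{R}_0$; I read this as the $\mathcal{R}_e$ derived in Section~\ref{sec:2.2.2}.)
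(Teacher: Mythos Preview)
Your proposal is correct and follows essentially the same route as the paper: both split the state into an uninfected block and an infected block and apply the Castillo--Chavez/Metzler-matrix criterion, with the paper's matrices $G_0$ and $G_2$ playing the roles of your uninfected Jacobian and your $A=F-V$. Your treatment is in fact more complete than the paper's, since you explicitly verify hypothesis (H2) (the nonnegativity of $\hat G$ via $S_H\le S_H^{*}$, $S_F\le S_F^{*}$, $S_D\le S_D^{*}$ on $\Omega$) and tie the stability of $A$ to $\mathcal{R}_e<1$ through $s(F-V)<0\Leftrightarrow\rho(FV^{-1})<1$, whereas the paper simply records that $G_0$ has negative eigenvalues and $G_2$ is Metzler and then asserts the conclusion.
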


\begin{proof}
Let \(U_{s}\) denote the compartments that do not contribute 
to rabies transmission, while \(U_{i}\) corresponds to those that do. 
If \(G_{2}\) is identified as a Metzler matrix (i.e., all off-diagonal 
elements are non-negative) and \(G_{0}\) possesses only negative real eigenvalues, 
the rabies-free equilibrium is shown to be globally asymptotically stable. 
Consequently, the model represented by equation~\eqref{eqn79} is reformulated as follows:
\begin{equation}
\begin{cases}
\begin{array}{llll}
\dfrac{dU_{s}}{dt} 
&= G_{0}\left(U_{s}-U\left(E^{0}\right)\right)+G_{1}U_{i}, \\
\dfrac{dU_{i}}{dt} 
&= G_{2}U_{i},
\end{array}
\end{cases}
\end{equation}
where
\begin{equation*}
G_{s} - U\left( E^{0}\right)
= \left(
\begin{array}{c}
S_{H} - \dfrac{\theta_{1}}{\mu_{1}} \\[8pt]
R_{H} \\[8pt]
S_{F} - \dfrac{\theta_{2}}{\mu_{2}} \\[8pt]
S_{D} - \dfrac{\theta_{3}}{\mu_{3}} \\[8pt]
R_{D}
\end{array}
\right),
\quad
G_{0} =
\left(
\begin{array}{ccccc}
-\mu & \beta_{3} & 0 & 0 & 0 \\[8pt]
0 & -\left(\beta_{3} + \mu_{1}\right) & 0 & 0 & 0 \\[8pt]
0 & 0 & -\mu_{2} & 0 & 0 \\[8pt]
0 & 0 & 0 & -\mu_{3} & \gamma_{3} \\[8pt]
0 & 0 & 0 & 0 & -\left(\mu_{3} + \gamma_{3}\right),
\end{array}
\right),
\end{equation*}
\begin{equation*}
G_{1}=
\left(
\begin {array}{ccccccc} 
0&0&0&{\dfrac {\tau_{{1}}\theta_{{1}}}{
\mu_{{1}}}}&0&{\dfrac {\tau_{{2}}\theta_{{1}}}{\mu_{{1}}}}&0\\ 
\noalign{\medskip}\beta_{{2}}&0&0&0&0&0&0\cr \noalign{\medskip}0&0&0
&{\frac {\kappa_{{1}}\theta_{{2}}}{\mu_{{2}}}}&0&{\dfrac {\kappa_{{2}}
\theta_{{2}}}{\mu_{{2}}}}&0\cr \noalign{\medskip}0&0&0&{\dfrac {\psi_{{1}}
\theta_{{3}}}{\mu_{{3}} \left( 1+\rho_{{1}} \right) }}&0
&{\dfrac {\psi_{{2}}\theta_{{3}}}{\mu_{{3}} \left( 1+\rho_{{2}} \right) }}
&0\cr \noalign{\medskip}0&0&0&0&\gamma_{{2}}&0&0
\end{array}
\right),
\end{equation*}
\begin{equation*}
\text{and}\;\;G_{2} =	
\left(
\begin {array}{ccccccc} 	
-\mu_{{1}}-\beta_{{1}}-\beta_{{2}}&0&0
&{\frac {\tau_{{1}}\theta_{{1}}}{\mu_{{1}}}}&0&{\frac {\tau_{{2}}
\theta_{{1}}}{\mu_{{1}}}}&0\\ \noalign{\medskip}\beta_{{1}}
&-\sigma_{{1}}-\mu_{{1}}&0&0&0&0&0\\ \noalign{\medskip}0&0
&-\mu_{{2}}-\gamma
&{\frac {\kappa_{{1}}\theta_{{2}}}{\mu_{{2}}}}&0
&{\frac {\kappa_{{1}}\theta_{{2}}}{\mu_{{2}}}}&0\\ 
\noalign{\medskip}0&0&\gamma&-\mu_{{2}}-
\sigma_{{2}}&0&0&0\\ \noalign{\medskip}0&0&0&{\frac {\psi_{{1}}
\theta_{{3}}}{\mu_{{3}} \left( 1+\rho_{{1}} \right) }}
&-\mu_{{3}}-\gamma_{{1}}-\gamma_{{2}}
&{\frac {\psi_{{2}}\theta_{{3}}}{\mu_{{3}} \left( 
1+\rho_{{1}} \right) }}&0\\ \noalign{\medskip}0&0&0
&0&\gamma&-\mu_{{3}}-\sigma_{{3}}&0\\ \noalign{\medskip}0
&\nu_{{1}}&0& \nu_{{2}}&0&\nu_{{3}}&-\mu_{{4}}
\end{array} \right). 
\end{equation*}
Given that the eigenvalues of the matrix \(E_{0}\) are negative 
and the off-diagonal entries of the Metzler matrix \(G_{2}\) 
are non-negative, it follows that the rabies equilibrium 
point \(E^{0}\) is globally asymptotically stable. 
\end{proof}

% ---------------------

\subsection{Rabies persistent equilibrium point $E^{*}$} 
\label{sec:2.2.4}

The point
$E^{*}=\left(S^{*}_{H},\;E^{*}_{H},\;I^{*}_{H},\;R^{*}_{H},\;S^{*}_{F},\;
E^{*}_{F}, I^{*}_{F},\;S^{*}_{D},\;E^{*}_{D},\;I^{*}_{D},\;R^{*}_{D},\;M^{*}\right)$,
is obtained by equating equation \eqref{eqn79} to zero:  
\begin{equation*}
\dfrac{dS_{H}}{dt}=\dfrac{dE_{H}}{dt}
=\dfrac{dI_{H}}{dt}=\dfrac{dR_{H}}{dt}=\dfrac{dS_{F}}{dt}
=\dfrac{dE_{F}}{dt}=\dfrac{dI_{F}}{dt}=\dfrac{dS_{D}}{dt}
=\dfrac{dE_{D}}{dt}=\dfrac{dI_{D}}{dt}=\dfrac{dR_{D}}{dt}=0.
\end{equation*}
Upon solving and  considering the  force infection  as  
\begin{align*}
\lambda^{*}_{H} &= \tau_{2}I^{*}_{D} + \tau_{1}I^{*}_{F} + \frac{\tau_{3}M^{*}}{M^{*} + C} ,\;
\lambda^{*}_{F} = \kappa_{1}I^{*}_{F} + \kappa_{2}I^{*}_{D} + \frac{\kappa_{3}M^{*}}{M^{*} + C}, \\
\lambda^{*}_{D} &= \left(1 - u_{1} - u_{2}\right)\left(\frac{\psi_{1}I^{*}_{F}}{1 + \rho_{1}} 
+ \frac{\psi_{2}I^{*}_{D}}{1 + \rho_{2}} + \frac{M^{*}\psi_{3}}{(1 + \rho_{3})(M^{*} + C)}\right),
\end{align*}
the resulting system is
\begin{equation}
\left.
\begin{array}{llll}
S^{*}_{{H}}={\dfrac {\beta_{{3}} \left( \beta_{{2}} \left( \beta_{{2}}+u_{{4}}
\right) E^{*}_{{H}} \right) -\theta_{{1}} \left( \beta_{{3}}+\mu_{{1}}
\right) }{\lambda^{*}_{{H}} \left( u_{{1}}+u_{{3}}-1 \right)  \left( 
\beta_{{3}}+\mu_{{1}} \right) }}
,\;I^{*}_{{H}}={\frac {\beta_{{1}}E^{*}_{{H}}}{\sigma_{{1}}+\mu_{{1}}}},\;
R^{*}_{{H}}={\frac { \left( \beta_{{2}}+u_{{4}} \right) E^{*}_{{H}}}{\beta_{{3
}}+\mu_{{1}}}},\\\\
E^{*}_{{H}}={\dfrac {\theta_{{1}} \left( \beta_{{3}}+\mu_{{1}} \right) }{{
\mu_{{1}}}^{2}+ \left( u_{{4}}+\beta_{{1}}+\beta_{{2}}+\beta_{{3}}
\right) \mu_{{1}}+ \left( u_{{4}}+\beta_{{1}}+\beta_{{2}} \right) 
\beta_{{3}}-\beta_{{2}} \left( \beta_{{2}}+u_{{4}} \right) }},
\end{array}
\right\} \text{for humans}   
\end{equation}
\begin{equation}
\left.
\begin{array}{llll}
S^{*}_{{F}}={\dfrac { \left( -\gamma-\mu_{{2}} \right) E^{*}_{{F}}+\theta_{{2}}
}{\mu_{{2}}}},\;
\lambda^{*}_{{F}}=-{\dfrac {E^{*}_{{F}}\mu_{{2}} \left( \mu_{{2}}+\gamma
\right) }{ \left( \mu_{{2}}+\gamma \right) E^{*}_{{F}}-\theta_{{2}}}}
,\;
I^{*}_{{F}}={\dfrac {\gamma\,E^{*}_{{F}}}{\mu_{{2}}+\sigma_{{2}}}},
\\\\ 
\end{array}
\right\} \text{free-range dogs}   
\end{equation}
\begin{equation}
\left.
\begin{array}{llll}
S^{*}_{{D}}={\dfrac { \left( -u_{{4}}-\mu_{{3}}-\gamma_{{1}}-\gamma_{{2}}
\right) E^{*}_{{D}}+\theta_{{3}}}{\mu_{{3}}}},\;
\lambda^{*}_{{D}}=-{\dfrac {E^{*}_{{D}}\mu_{{3}} \left( \mu_{{3}}+\gamma_{{1}}+
\gamma_{{2}}+u_{{4}} \right) }{ \left( \mu_{{3}}+\gamma_{{1}}+\gamma_{
{2}}+u_{{4}} \right) E^{*}_{{D}}-\theta_{{3}}}},  \\\\
I^{*}_{{D}}={\dfrac {E^{*}_{{D}}\gamma_{{1}}}{\mu_{{3}}+\sigma_{{3}}}}, \; 
R^{*}_{{D}}={\dfrac {E^{*}_{{D}} \left( \gamma_{{2}}+u_{{4}} \right) }{\mu_{{3}
}+\gamma_{{3}}}},\;
M^{*}={\frac {\gamma_{{1}}E^{*}_{{D}}\nu_{{3}}}{\mu_{{4}} \left( \mu_{{3}}+
\sigma_{{3}} \right) }}+{\frac {\beta_{{1}}E^{*}_{{H}}\nu_{{1}}}{\mu_{{4}}
\left( \sigma_{{1}}+\mu_{{1}} \right) }}+{\frac {\gamma\,E^{*}_{{F}}\nu_{
{2}}}{\mu_{{4}} \left( \mu_{{2}}+\sigma_{{2}} \right) }}.
\end{array}
\right\} \text{domestic dogs}   
\end{equation}
Thus, $E^{*}$  persists if 
$E_{H},\; E_{F},\; E_{D} > 0$ and ${\cal R}_e \geq 1$, 
as stated in Theorem~\ref{The}.

\begin{theorem}
The system model \eqref{eqn79} has a unique endemic equilibrium ${E}^*$ 
if $\mathcal{R}_e \geq 1$ and $E_{H},\; E_{F},\; E_{D} > 0$.
\label{The}
\end{theorem}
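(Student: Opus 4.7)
My plan is to reduce the twelve-dimensional equilibrium problem to a low-dimensional fixed-point system in the exposed compartments and to establish existence and uniqueness of a strictly positive solution when $\mathcal{R}_e \geq 1$. The explicit formulas already derived in the excerpt express every other coordinate of $E^{*}$ (namely $S_{H}^{*}, I_{H}^{*}, R_{H}^{*}, S_{F}^{*}, I_{F}^{*}, S_{D}^{*}, I_{D}^{*}, R_{D}^{*}, M^{*}$) as rational functions of $(E_{H}^{*}, E_{F}^{*}, E_{D}^{*})$ and the forces of infection $(\lambda_{H}^{*}, \lambda_{F}^{*}, \lambda_{D}^{*})$. Substituting these back into the three defining equations for the exposed classes, $\chi_{j} S_{\bullet}^{*} = (\text{removal rate})\, E_{\bullet}^{*}$ for $j = 1, 2, 3$, reduces the problem to a closed nonlinear system $\Phi(E_{H}^{*}, E_{F}^{*}, E_{D}^{*}) = (E_{H}^{*}, E_{F}^{*}, E_{D}^{*})$ posed on the compact positively invariant box produced in Section~\ref{sec:2.2.1}.

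To prove existence I would apply the Brouwer fixed-point theorem to $\Phi$ on this box. The only obstacle is that $\Phi$ always has the trivial fixed point $(0,0,0)$ corresponding to $E^{0}$; to exclude it, I would compute the Jacobian $D\Phi(0,0,0)$ and identify it, up to multiplication by the invertible matrix $V$, with $FV^{-1}$, whose spectral radius is $\mathcal{R}_{e}$. When $\mathcal{R}_{e} \geq 1$, the Perron eigenvector $v \geq 0$ of $FV^{-1}$ satisfies $\Phi(\varepsilon v) \geq \varepsilon v$ coordinate-wise for small $\varepsilon > 0$, so iterating $\Phi$ from $\varepsilon v$ produces a monotone non-decreasing sequence that is trapped inside the box and hence converges to a fixed point with all three exposed coordinates strictly positive, which is precisely the second hypothesis of the theorem.

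Uniqueness follows from the structural concavity of $\Phi$ on the positive orthant: the direct-contact contributions $\tau_{1} I_{F}^{*}, \tau_{2} I_{D}^{*}$, $\kappa_{1} I_{F}^{*}, \kappa_{2} I_{D}^{*}$ and their domestic-dog analogues enter linearly in $(E_{F}^{*}, E_{D}^{*})$, while the environmental term $\lambda(M^{*}) = M^{*}/(M^{*} + C)$ enters through a strictly concave increasing function of a positive linear combination of $(E_{H}^{*}, E_{F}^{*}, E_{D}^{*})$. Because $\Phi$ is continuous, monotone and strictly concave on the interior of the positive cone, a classical sublinear-operator argument of Krein--Rutman / Krasnoselskii type guarantees at most one strictly positive fixed point; combined with the existence step, this yields the unique endemic equilibrium $E^{*}$.

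The main obstacle I foresee is the verification of strict concavity in the presence of the purely linear direct-contact terms, which on their own only give concavity, not strict concavity. I would circumvent this by exploiting the environmental coupling: since $\nu_{1}, \nu_{2}, \nu_{3} > 0$, the quantity $M^{*}$ is a strictly positive linear function of each of $E_{H}^{*}, E_{F}^{*}, E_{D}^{*}$, so the strictly concave contribution of $\lambda(M^{*})$ appears in every component of $\Phi$ and the resulting sublinearity is strict throughout the interior of the cone, which is exactly what the uniqueness lemma requires.
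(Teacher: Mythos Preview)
Your proposal is far more substantive than the paper's treatment of this theorem. The paper offers no proof environment at all: it simply solves the equilibrium relations to express each coordinate of $E^{*}$ in terms of $E_{H}^{*},E_{F}^{*},E_{D}^{*}$ and the forces of infection, then states the theorem as an assertion. In particular, the paper's displayed formula for $E_{H}^{*}$ is a pure constant, independent of the dog and environmental variables, which already signals that the algebra there has not been closed into a genuine existence--uniqueness argument. So your monotone/concave fixed-point strategy is not a variant of the paper's proof; it is a replacement for a missing one.

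That said, there is a concrete gap in your plan. With the waning-immunity loops $\beta_{3}R_{H}\!\to\!S_{H}$ and $\gamma_{3}R_{D}\!\to\!S_{D}$ in the model, the reduced map on $(E_{H},E_{F},E_{D})$ takes the form
\[
\Phi_{H}(E)=\frac{\chi_{1}\bigl(\theta_{1}+\beta_{3}R_{H}(E_{H})\bigr)}{(\mu_{1}+\chi_{1})\,(\mu_{1}+\beta_{1}+\beta_{2}+u_{4})},
\]
with $R_{H}$ linear in $E_{H}$ and $\chi_{1}$ depending on $(E_{F},E_{D},M)$. This map is monotone, but it is \emph{not} concave: the Hessian in the pair $(\chi_{1},E_{H})$ is indefinite because of the bilinear term $\chi_{1}\cdot R_{H}$. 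Equivalently, the sublinearity inequality $\Phi_{H}(tE)>t\,\Phi_{H}(E)$ reduces to $\theta_{1}\chi_{1}>\mu_{1}\beta_{3}R_{H}$, which fails for generic $E$ in the positive cone (take $E_{H}$ large and $E_{F},E_{D}$ small). So the blanket ``monotone, strictly concave'' hypothesis you invoke for the Krasnoselskii uniqueness lemma is not satisfied by $\Phi$ on the exposed variables, and your resolution via the environmental term $\lambda(M)$ does not repair this, since the obstruction comes from the $R\!\to\!S$ feedback rather than from linearity of the contact terms. Two fixes are available: either check strict sublinearity only \emph{at} positive fixed points (where one can show $\theta_{1}\chi_{1}>\mu_{1}\beta_{3}R_{H}$ does hold, using $S_{H}^{*}\le \theta_{1}/\mu_{1}$), which is all the uniqueness argument actually needs; or, more cleanly, take the forces of infection $(\chi_{1},\chi_{2},\chi_{3})$ as the fixed-point variables, in which case each $S_{\bullet}^{*}$ becomes $\theta_{\bullet}/(\mu_{\bullet}+\rho_{\bullet}\chi_{\bullet})$ with $\rho_{\bullet}\in(0,1)$ absorbing the waning-immunity loop, and the resulting map is genuinely monotone and strictly concave. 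A second, minor point: $D\Phi(0)$ is $3\times 3$ while $FV^{-1}$ is $7\times 7$, so the identification of their spectral radii requires the (easy but unstated) observation that the $I$- and $M$-compartments are linearly slaved to the $E$-compartments at the DFE.
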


The endemic equilibrium point $E^{*}$
is globally stable. This is proved in \ref{AppA} 
(see Theorem~\ref{thm:06}).

% -------------------

\subsection{Global Sensitivity Analysis}

In this study, we employed Latin Hypercube Sampling (LHS) and the Partial Rank 
Correlation Coefficient (PRCC) methods to conduct a comprehensive global sensitivity analysis. 
The LHS technique was used to generate 1,000 model parameter runs, incorporating uncertainty 
by treating these parameters as probabilistic variables uniformly distributed over predefined 
ranges. Following this, the PRCC approach was utilized to quantify the degree of monotonic 
relationship between each model input (parameter) and output (state variable), providing 
insights into how each parameter influences the model dynamics. Let \(X_i\) represent 
the sampled input parameter and \(Z_i\) the corresponding model output, with respective 
means \(\bar{X}\) and \(\bar{Z}\). The PRCC value \(r\), which denotes the correlation 
between \(X_i\) and \(Z_i\), is computed as follows:
\begin{equation}
r = \frac{\text{Cov}(X_i, Z_i)}{\sqrt{\text{Var}(X_i)\text{Var}(Z_i)}} 
= \frac{\sum_{i=1}^{N}(X_i - \bar{X})(Z_i - \bar{Z})}{\sqrt{\sum_{i=1}^{N}(X_i 
- \bar{X})^2 \sum_{i=1}^{N}(Z_i - \bar{Z})^2}}.
\end{equation}

The PRCC values range within the closed interval \([-1, +1]\), where values closer 
to either bound indicate a stronger monotonic relationship between input parameters 
and model outputs. A positive PRCC value (\(r > 0\)) reflects a direct (increasing) 
monotonic relationship, suggesting that an increase in the input parameter leads 
to a corresponding increase in the output variable. In contrast, a negative PRCC 
value (\(r < 0\)) denotes an inverse (decreasing) monotonic relationship, whereby 
increasing the input parameter results in a decline in the associated output. 
The results of the sensitivity analysis are visually summarized through histograms 
presented in Figures~\ref{FigRC1}(a)--\ref{FigRC1}(d), illustrating the evolution 
of parameter influence over time. Specifically, Figure~\ref{FigRC1}(a) reveals that 
the parameters $\theta_1$, $\tau_1$, $\tau_2$, $\kappa_1$, and $\kappa_2$ exhibit 
strong positive correlations with the number of infected humans ($I_H$), indicating 
that higher values of these parameters lead to an increased burden of infection 
in human  populations. Moreover, parameters associated with environmental contamination, 
such as the shedding rates $\nu_1$, $\nu_2$, and $\nu_3$, also display positive PRCC values, 
implying their significant role in enhancing indirect rabies transmission through 
environmental exposure. On the other hand, parameters such as $\beta_2$, $\rho_3$, 
$\rho_1$, and $\gamma_2$ exhibit negative PRCC values with respect to $I_H$, 
suggesting that increasing these parameter values contributes to a reduction 
in the number of infections.
\begin{figure}[H]
\begin{minipage}[b]{0.45\textwidth}
\includegraphics[scale=0.55]{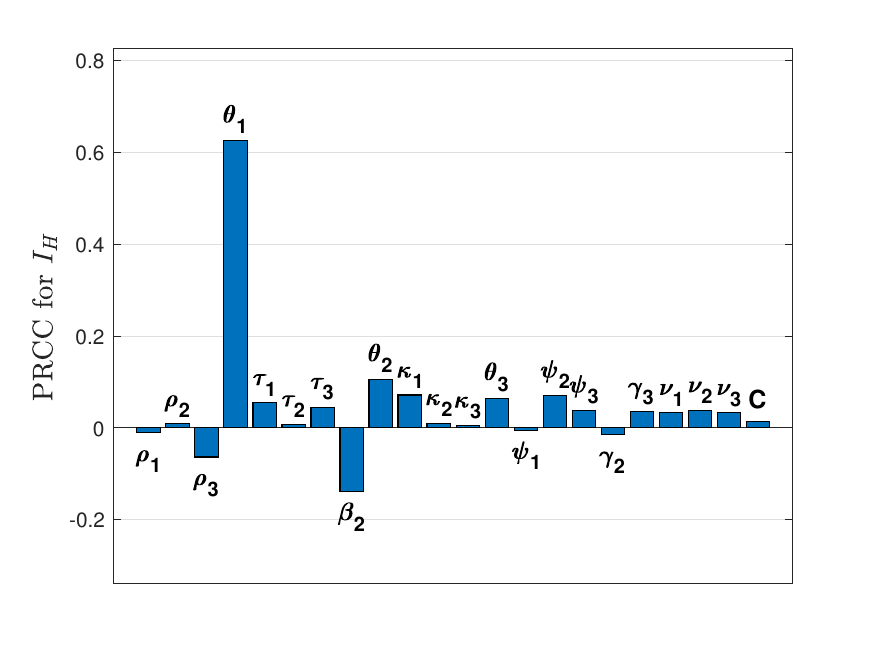}
\centering{(a)}
\end{minipage}
\begin{minipage}[b]{0.45\textwidth}
\includegraphics[scale=0.55]{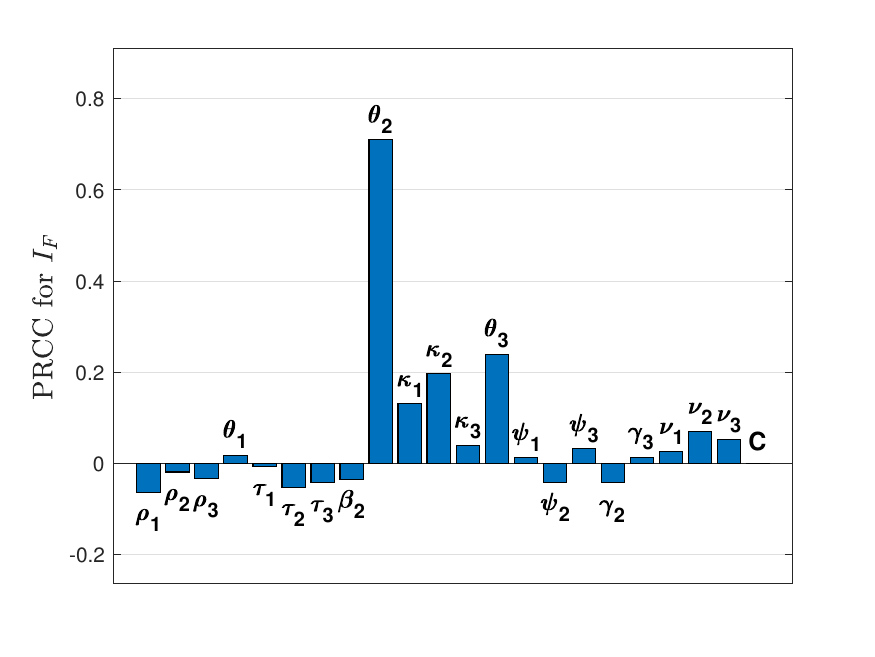}
\centering{(b)}
\end{minipage}
\centering
\caption{\centering The sensitivity analysis of the model outputs 
(populations of infected humans (a), infected free range dogs (b).}
\label{FigRC1}
\end{figure}
Figure~\ref{FigRC1}(b) of infected  free range dogs   $I_F$, identifies $\kappa_1$, 
$\kappa_2$, $\kappa_2$, $\theta_2$, $\theta_3$, $\nu_1$, $\nu_2$, $\nu_3$, 
and $C$ as significant influences of $I_F$ throughout the epidemic.
\begin{figure}[H]
\begin{minipage}[b]{0.45\textwidth}
\includegraphics[scale=0.55]{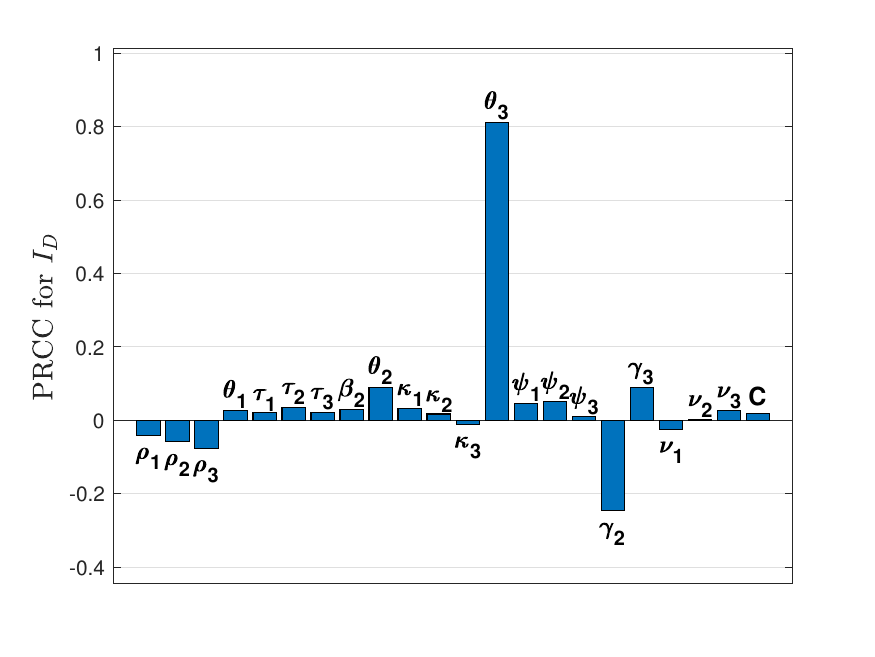}
\centering{(c)}
\end{minipage}
\begin{minipage}[b]{0.45\textwidth}
\includegraphics[scale=0.55]{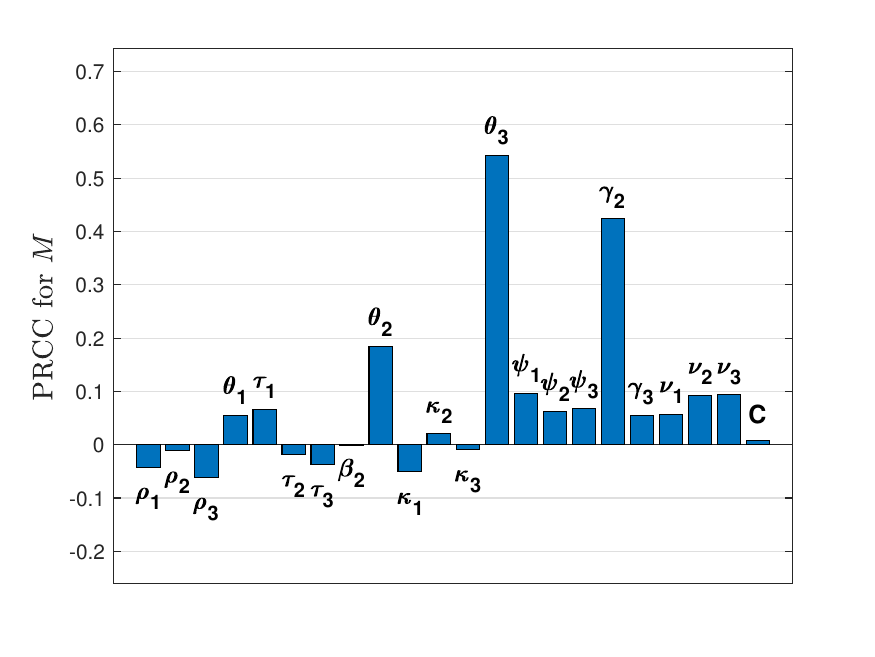}
\centering{(d)}
\end{minipage}
\centering
\caption{\centering The sensitivity analysis of the model outputs 
(populations of infected humans (c), infected free range dogs (d).}
\label{FigRC2}
\end{figure}
Figures~\ref{FigRC2}(c)--\ref{FigRC2}(d) illustrate the sensitivity of the model 
outputs to various input parameters. Specifically, Figure~\ref{FigRC2}(c) shows 
that the parameters $\kappa_1$, $\kappa_2$, $\kappa_3$, $\psi_1$, $\psi_2$, 
$\psi_3$, $\beta_2$, and $\Lambda_2$ exhibit positive PRCC values with respect 
to the number of infected domestic dogs ($I_D$), indicating that increases 
in these parameters contribute to a rise in infection levels within the domestic 
dog population. Conversely, the parameters $\rho_1$, $\rho_2$, and $\rho_3$ 
demonstrate negative PRCC values, suggesting that enhancing these parameters 
may contribute to a reduction in $I_D$.

Furthermore, Figure~\ref{FigRC2}(d) highlights the significant influence 
of the environmental shedding rates $\nu_1$, $\nu_2$, and $\nu_3$ on the 
persistence and amplification of rabies in the environment. These findings 
underscore the critical role of environmental contamination in the indirect 
transmission of the disease. Consequently, effective control strategies 
should prioritize sustained reductions in these key parameters throughout 
the course of the outbreak to mitigate environmental transmission and disease burden.

% ------------------

\subsection{Formulation of the optimal control problem}
\label{sec:3.1}

We aim to reduce the infected human and dog populations while 
also optimizing the costs linked to the implementation of control strategies. 
The objective function that reduces the $I_{H}$  and $I_{D}$ is given by
\begin{equation}
\begin{aligned}
J\left(u_j\right)_{1\leq j\leq 4} = \min_{\left(u_1, u_2, u_3, u_4\right)}
\mathop{{\int_{t_0}^{t_f}}} \left(K_1M + K_2E_H + K_3I_H + K_4E_D 
+ K_5I_D - K_6S_D + \frac{1}{2} \sum_{j=1}^{4} A_ju^{2}_j \right) \, dt
\end{aligned}
\label{eqn80}
\end{equation}
subject to the control system \eqref{eqn79}, where $K_1$ represents the constant 
weight for the environment while $K_2$ and $K_4$ are the constant weights 
for the exposed classes of humans and dogs, respectively. Likewise, $K_3$ and 
$K_5$ are the constant weights for the infectious classes of humans and dogs, respectively, 
and $K_6$ is the constant weight for vaccinated domestic dogs. The coefficients 
$A_1$, $A_2$, $A_3$, and $A_4$ are relative cost weights allocated to each specific 
control measure to transform the integral into the cost occurred over a time frame 
of $t_f$ years. This duration represents the time period for implementing the controls. 
In essence, the cost associated with each control scenario is considered to 
follow a non-linear quadratic pattern, denoted as $\dfrac{A_1u^{2}_1}{2}$. 
This cost reflects the expenses incurred in implementing control strategies aimed 
at promoting environmentally sustainable health practices and management, 
specifically surveillance and monitoring. Similarly, $\dfrac{A_2u^{2}_2}{2}$ 
represents the cost of vaccinating domestic dogs, while $\dfrac{A_3u^{2}_3}{2}$ 
is linked to community education about the risks of rabies exposure and prevention 
(public awareness and education). Meanwhile, $\dfrac{A_4u^{2}_4}{2}$ pertains 
to the gradual treatment of infected individuals
by potentially rabid animals (PEP). Each of these costs contributes to the 
overall expenditure required to address and mitigate the risks associated with rabies.

Considering  $J\left(\text{$u_1, u_2, u_3, u_4$}\right)$, 
the primary goal is to minimize the number of exposed and infected 
for both human and domestic dogs  while simultaneously maximizing 
the number of recovered humans and domestic dogs. 
Consequently, our aim is to determine  
$u^{*}_1$, $u^{*}_2$, $u^{*}_3$, $u^{*}_4$  that satisfy the condition
$J\left(\text{$u^{*}_1, u^{*}_2, u^{*}_3,u^{*}_4$}\right)
= \text{min}  J\left(\text{$u_1, u_2, u_3, u_4$}\right)$,  where 
$U=\left\{u : u \text{ is measurable} \; \text{and}\; 
0\leq u_j\left(t\right) \leq 1, j = 1, \ldots, 4, \; \text{for }\;t\in 
\left[\text{$t_0, t_f$}\right]\right\}$ is the control set,
with the state variables $S_H\left(t\right)$, $E_H\left(t\right)$, $I_H\left(t\right)$,   
$R_H\left(t\right)$, $S_F\left(t\right)$, $E_F\left(t\right)$, $I_F\left(t\right)$,
$S_D\left(t\right)$, $E_D\left(t\right)$, $I_D\left(t\right)$,  
$R_D\left(t\right)$,  and $M\left(t\right)$ satisfying the control system \eqref{eqn79}
and given initial conditions \eqref{ic}.

% --------------------------

\subsection{Analysis of the optimal control problem}

This can be proved by characterizing the optimal control via Pontryagin's 
maximum principle, elucidating its essential features and providing 
a comprehensive understanding 
of its structure and dynamics.

% --------------

\subsubsection{Existence of a unique optimal control}
\label{sec:ex:unq:oc}

The solution to our optimal control problem is ensured by applying the results found  in 
\cite{stoddart1967existence,sagamiko2015optimal}.

\begin{theorem}
The optimal control problem formulated in Section~\ref{sec:3.1} admits a solution.
\end{theorem}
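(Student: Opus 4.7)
The idea is to invoke a classical Filippov-type existence theorem for Lagrange problems with compact, convex control sets, as in \cite{stoddart1967existence,sagamiko2015optimal}. Such a theorem requires verifying five standard hypotheses: (i) the set of admissible state-control pairs is non-empty; (ii) the control set $U$ is closed and convex; (iii) the right-hand side of the state system is bounded above by a linear function of the state and control; (iv) the running cost $L$ appearing in \eqref{eqn80} is convex in the control variable $u$ for each fixed $(t,x)$; and (v) there exist constants $c_1 > 0$, $c_2 \in \mathbb{R}$ and $\beta > 1$ such that $L(t,x,u) \geq c_1 \|u\|^{\beta} - c_2$.

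Conditions (i), (ii) and (iv) are essentially free given what is already in the paper. For (i), Lemma~\ref{theorem:1} supplies Lipschitz continuity of the vector field and the subsequent boundedness theorem supplies positive invariance of the compact region $\Omega = \Omega_H \times \Omega_F \times \Omega_D \times [0, \Omega_M]$, so for every measurable $u \in U$ the state system \eqref{eqn79} admits a unique, globally defined, bounded trajectory; in particular the admissible set is non-empty. For (ii), $U = \{u\in L^{\infty}([t_0,t_f];\mathbb{R}^4) : 0 \le u_j(t) \le 1\}$ takes values in the hypercube $[0,1]^4$, which is manifestly closed and convex. Condition (iv) is immediate since the integrand contains $\tfrac{1}{2}\sum_{j=1}^{4} A_j u_j^{2}$ with strictly positive weights $A_j$, a strictly convex quadratic form in $u$, while every other term in $L$ is independent of $u$.

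The main work is in (iii) and (v). For (iii), the nonlinearities in \eqref{eqn79}, namely bilinear products such as $\tau_1 I_F S_H$, $\psi_2 I_D S_D/(1+\rho_2)$, and the incidence contributions $\tau_3 \lambda(M) S_H$, must be controlled by an affine expression in $(x,u)$. The key observation is that on the invariant region $\Omega$ every state component is uniformly bounded; in particular $\lambda(M) = M/(M+C) \le 1$, each product $I_\ast S_\ast$ is dominated by a fixed constant times one of its factors, and the controls appear only linearly and only multiplied by such already-bounded quantities. Collecting these estimates yields a bound of the form $\|f(t,x,u)\| \le K(\|x\| + \|u\| + 1)$ for a constant $K$ depending on the model parameters and on $\Omega$. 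For (v), the inequality
\begin{equation*}
\frac{1}{2}\sum_{j=1}^{4} A_j u_j^{2} \;\geq\; \frac{1}{2}\,\Bigl(\min_{1 \le j \le 4} A_j\Bigr)\|u\|^{2}
\end{equation*}
together with the boundedness of $K_1 M + K_2 E_H + K_3 I_H + K_4 E_D + K_5 I_D - K_6 S_D$ on $\Omega$ gives $L(t,x,u) \ge c_1 \|u\|^{2} - c_2$ with $\beta = 2 > 1$.

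The main obstacle is the negative term $-K_6 S_D$ in the integrand, which a priori could obstruct coercivity; this is handled by absorbing it into $c_2$ using the uniform bound $S_D \le \theta_3/\mu_3$ from $\Omega_D$. Once (i)--(v) are all established, the existence theorem of \cite{stoddart1967existence,sagamiko2015optimal} produces an optimal pair $(u^{*}, x^{*})$ minimizing $J$ over $U$, completing the proof.
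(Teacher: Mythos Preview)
Your proposal is correct and follows essentially the same classical Filippov--Cesari existence route as the paper, citing the same references \cite{stoddart1967existence,sagamiko2015optimal}. The paper's own proof is terser, verifying only that the system \eqref{eqn79} is linear in the controls, that $U$ is closed, bounded and convex, and that the integrand of \eqref{eqn80} is convex in $u$; your version is more thorough in spelling out the remaining hypotheses (non-emptiness via Lemma~\ref{theorem:1} and the boundedness theorem, linear growth on $\Omega$, coercivity with the $-K_6 S_D$ term absorbed into $c_2$), but the underlying argument is the same.
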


\begin{proof}
We want to show that there exists a control $\bar{u} = \left(\bar{u}_1, \bar{u}_2, 
\bar{u}_3, \bar{u}_4\right) \in U$ such that $J(\bar{u}) \leq J(u)$  for all 
$u = \left(u_1, u_2, u_3, u_4\right) \in U$ subject to \eqref{eqn79}--\eqref{ic}.
This follows from\cite{stoddart1967existence,sagamiko2015optimal} noting that
(i)~the control system \eqref{eqn79} is linear with respect to the control variables;
(ii)~the control set $U$ is  bounded, convex and closed,
and (iii)~the integrand of the objective functional (\ref{eqn80}) is convex in $U$.
\end{proof}

\begin{theorem}
The solution to the optimal control problem formulated in Section~\ref{sec:3.1} is unique.
\end{theorem}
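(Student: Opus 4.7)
The plan is to combine Pontryagin's Maximum Principle (PMP), the strict convexity of the integrand in the controls, and a Gronwall-type estimate on the coupled state--adjoint system. Writing $x=(S_H,E_H,\ldots,M)$ and introducing adjoint variables $\xi=(\xi_1,\ldots,\xi_{12})$ (chosen so as not to clash with the saturation $\lambda(M)=M/(M+C)$), I would first form the Hamiltonian
\[
H = K_1 M + K_2 E_H + K_3 I_H + K_4 E_D + K_5 I_D - K_6 S_D + \tfrac{1}{2}\sum_{j=1}^{4} A_j u_j^2 + \sum_{k=1}^{12}\xi_k f_k(x,u),
\]
where $f_k$ is the right-hand side of the $k$-th equation of \eqref{eqn79}. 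The adjoint system $\dot\xi_k=-\partial H/\partial x_k$ with transversality $\xi_k(t_f)=0$ is linear in $\xi$ with coefficients that depend continuously on the states; since the state trajectories are uniformly bounded by the theorem of Section~\ref{sec:2.2.1}, the adjoints are themselves $L^\infty$-bounded on $[t_0,t_f]$.

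Because $H$ is strictly convex in each $u_j$ (coefficient $A_j/2>0$) and $U=[0,1]^4$ is compact and convex, the pointwise minimizer of $H(x,\xi,\cdot)$ is unique and given by the projected feedback
\[
u_j^* = \min\bigl\{1,\max\{0,\Phi_j(x,\xi)\}\bigr\},\qquad j=1,\ldots,4,
\]
where each $\Phi_j$ is a smooth polynomial--rational function of $(x,\xi)$ obtained by solving $\partial H/\partial u_j=0$. In particular, once the state--adjoint trajectory is fixed, the control is determined uniquely.

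To close the argument, I would suppose that $(\bar x,\bar\xi,\bar u)$ and $(\tilde x,\tilde\xi,\tilde u)$ are two solutions of the optimality system \eqref{eqn79}, the adjoint equations and the feedback law, both subject to \eqref{ic} and $\xi_k(t_f)=0$. Substituting the feedback into the dynamics yields two closed systems for $(x,\xi)$. Using the $L^\infty$ bounds on $x$ and $\xi$, the Lipschitz continuity of each $f_k$ and each $\partial H/\partial x_k$ on the bounded invariant region, the $1/C$-Lipschitz continuity of $\lambda(M)$, and the non-expansiveness of the projection entering $\Phi_j$, a direct estimate yields, for some constant $L$ depending only on the a priori bounds and the model parameters,
\[
\|\bar x(t)-\tilde x(t)\| + \|\bar\xi(t)-\tilde\xi(t)\| \;\le\; L\int_{t_0}^{t_f}\bigl(\|\bar x-\tilde x\|+\|\bar\xi-\tilde\xi\|\bigr)\,ds.
\]
Integrating both sides over $[t_0,t_f]$ and invoking the smallness-of-horizon condition $L(t_f-t_0)<1$, standard in this class of epidemiological optimal-control problems, forces $\bar x\equiv\tilde x$ and $\bar\xi\equiv\tilde\xi$, and hence $\bar u\equiv\tilde u$.

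The main obstacle will be the bookkeeping associated with the nonlinear cross-terms $(1-u_1-u_3)(\tau_1 I_F+\tau_2 I_D+\tau_3\lambda(M))S_H$ and its domestic-dog analogue: each triple product must be split additively via a telescoping decomposition and every factor estimated by its uniform $L^\infty$ bound, both when computing $\|\bar f-\tilde f\|$ and when differentiating to form the adjoint equations. The saturation $\lambda(M)$ adds a further nonlinearity, but because $\lambda'(M)=C/(M+C)^2$ is uniformly bounded on $[0,\Omega_M]$, the required Lipschitz estimates remain available. Once these estimates are assembled, the Gronwall step and the uniqueness conclusion follow in a routine manner.
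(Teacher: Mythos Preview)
Your approach is essentially correct and is the standard textbook route for this class of problems, but it differs substantially from the paper's argument. The paper never passes through the optimality system: it works directly with the cost functional, assumes two minimizers $\bar u$ and $\tilde u$, sets $\delta u_j=\bar u_j-\tilde u_j$, expands $J(\tilde u+\delta u)-J(\tilde u)$, retains only the quadratic control-cost contribution $\int_{t_0}^{t_f}\sum_j\bigl(2A_j\tilde u_j\,\delta u_j+A_j\,\delta u_j^{2}\bigr)\,dt$, and then squeezes this integral to zero. Your argument instead establishes uniqueness of the coupled state--adjoint--feedback system via a Lipschitz/Gronwall contraction, which simultaneously delivers the characterization of $u^{*}$ and is the mathematically more robust path. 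The price you pay is the explicit smallness-of-horizon hypothesis $L(t_f-t_0)<1$, which the theorem as stated does not carry; the paper's direct argument avoids that restriction in appearance, though its expansion quietly discards the state-dependent terms $K_1M+K_2E_H+\cdots-K_6S_D$ (which change when the controls change, since different controls drive different trajectories), so the saving in hypotheses comes at the cost of a gap that your approach does not have.
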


\begin{proof}
Assume that there exists optimal controls $\bar{u}_1, \bar{u}_2, \bar{u}_3, \bar{u}_4$ 
and $\tilde{u}_1, \tilde{u}_2, \tilde{u}_3, \tilde{u}_4$ for the problem
of Section~\ref{sec:3.1}, both achieving 
the minimum value of the objective functional:
\begin{equation}
J(\bar{u}_1, \bar{u}_2, \bar{u}_3, \bar{u}_4) 
= J(\tilde{u}_1, \tilde{u}_2, \tilde{u}_3, \tilde{u}_4) 
= \min_{u_1, u_2, u_3, u_4} J(u_1, u_2, u_3, u_4).
\end{equation}
Consider the difference between the two optimal controls such that:
\begin{equation*}
\delta u_j = \bar{u}_j - \tilde{u}_j \quad \text{for } j=1,2,3,4.
\end{equation*}
Since both controls are optimal, we have
\begin{equation}
J(\tilde{u}_1, \tilde{u}_2, \tilde{u}_3, \tilde{u}_4) 
\leq J(\tilde{u}_1+\delta u_1, \tilde{u}_2
+\delta u_2, \tilde{u}_3+\delta u_3, \tilde{u}_4+\delta u_4).
\label{eqn8}
\end{equation}
Expanding the expression \eqref{eqn8}, and subtracting 
$J(\tilde{u}_1, \tilde{u}_2, \tilde{u}_3, \tilde{u}_4)$ 
from both sides, we get:
\begin{equation}
\left\{
\begin{aligned}
&J(\tilde{u}_1+\delta u_1, \tilde{u}_2+\delta u_2, \tilde{u}_3
+\delta u_3, \tilde{u}_4+\delta u_4) - J(\tilde{u}_1, 
\tilde{u}_2, \tilde{u}_3, \tilde{u}_4) \\
&= \int_{t_0}^{t_f} \left(\sum_{j=1}^{4} 2A_j
\tilde{u}_j\delta u_j + \sum_{j=1}^{4} A_j\delta u_j^{2}\right)dt.
\end{aligned}
\right.
\label{eqn9}
\end{equation}
Since all terms in expression \eqref{eqn9} have non-negative integrand, 
\begin{equation}
0 \leq \int_{t_0}^{t_f} \left(\sum_{j=1}^{4} 2A_j\tilde{u}_j\delta u_j 
+ \sum_{j=1}^{4} A_j\delta u_j^{2}\right)dt.
\end{equation}
Now, since $0 \leq \tilde{u}_j(t) \leq 1$ and $0 \leq \delta u_j(t) \leq 1$ 
for all $t \in [t_0, t_f]$ and $j=1,2,3,4$, we have 
$$
0 \leq 2A_j\tilde{u}_j
\delta u_j \leq 2A_j|\delta u_j|
$$ 
and 
$$
0 \leq A_j\delta u_j^{2} \leq A_j|\delta u_j|, 
$$
where $|\delta u_j|$ represents the absolute value of $\delta u_j$. Therefore, 
\begin{equation}
0 \leq \int_{t_0}^{t_f} \left(\sum_{j=1}^{4} 2A_j\tilde{u}_j\delta u_j 
+ \sum_{j=1}^{4} A_j\delta u_j^{2}\right)dt \leq \int_{t_0}^{t_f} 
\left(\sum_{j=1}^{4} 2A_j|\delta u_j| + \sum_{j=1}^{4} A_j|\delta u_j|\right)dt = 0.
\label{eq1}
\end{equation}
The inequality \eqref{eq1} holds if $\delta u_j(t) = 0$ for all $t \in [t_0, t_f]$ and $j=1,2,3,4$. 
This implies that $\bar{u}_j(t) = \tilde{u}_j(t)$ for all $t \in [t_0, t_f]$ 
and $j=1,2,3,4$, which means the two optimal controls are the same. 
\end{proof}

% ----------------------------

\subsubsection{Characterization of the optimal control}

Pontryagin's Maximum Principle (PMP), as applied in 
\cite{silva2021optimal,yamada2019comparative,teklu2024impacts}, 
is now employed to characterize the solution in Section~\ref{sec:3.1}. 
The conditions outlined by the PMP transform the optimal control problem 
into the task of minimizing a pointwise function

We introduce the Hamiltonian \(H\) given by
\begin{equation}
\begin{aligned}
& \left.
\begin{array}{l}
H = K_1M + K_2E_H + K_3I_H + K_4E_D + K_5I_D - K_6S_D 
+ \dfrac{1}{2}u^{2}_1A_{1} + \dfrac{1}{2}u^{2}_2A_{2}\\
\quad + \dfrac{1}{2}u^{2}_3A_{3} + \dfrac{1}{2}u^{2}_4A_{3} 
+ \lambda_{1}\left(\theta_{1}+\beta_{3}R_{H}
-\mu_{1} S_{H}-\left(1-\left(u_1+u_3\right)\right)\chi_{1}\right) \\
\quad + \lambda_{2}\left(\left(1-\left(u_1+u_3\right)\right)\chi_{1}
-\left(\mu_{1}+\beta_{1}+\beta_{2}+u_{4}\right)E_{H}\right) 
+ \lambda_{3}\left(\beta_{1}E_{H}-\left(\sigma_{1}+\mu_{1}\right) I_{H}\right) \\
\quad + \lambda_{4}\left(\left(\beta_{2}+u_4\right) E_{H}
-\left(\beta_{3}+\mu_{1} \right) R_{H}\right) 
+ \lambda_{5}\left(\theta_{2}-\chi_{2}-\mu_{2}S_{F}\right) \\
\quad + \lambda_{6}\left(\chi_{2}-\left(\mu_{2}+\gamma\right)
E_{F}\right) + \lambda_{7}\left(\gamma E_{F}-\left(\mu_{2}+\sigma_{2}\right)I_{F}\right)\\
\quad + \lambda_{8}\left(\theta_{3}-\mu_{3}S_{D}-\left(1
-\left(u_1+u_2\right)\right)\chi_{3}+\gamma_{3}R_{D}\right) \\
\quad + \lambda_{9}\left(\left(1-\left(u_1+u_2\right)\right)\chi_{3}
-\left(\mu_{3}+\gamma_{1}+\gamma_{2}+u_{4}\right) E_{D}\right) \\
\quad + \lambda_{10}\left(\gamma_{1}E_{D}-\left(\mu_{3}+\sigma_{3}\right) I_{D}\right) 
+ \lambda_{11}\left(\left(\gamma_{2}+u_4\right)E_{D}-\left(\mu_{3}+\gamma_{3}\right)R_{D}\right) \\
\quad + \lambda_{12}\left(\left(\nu_1I_H+\nu_2I_F+\nu_3I_D\right)-\mu_4M\right),
\end{array}
\right\}
\end{aligned}
\label{eqn81}
\end{equation} 
where
\begin{align*}
\chi_{1} &= \left(\tau_{1}I_{F} + \tau_{2}I_{D} + \tau_{3} \lambda(M)\right)S_{H},\; 
\chi_{2} = \left(\kappa_{1}I_{F} + \kappa_{2}I_{D} + \kappa_{3} \lambda(M)\right)S_{F},\\
\chi_{3} &= \left(\dfrac{\psi_{1}I_{F}}{1+\rho_{1}} + \dfrac{\psi_{2}I_{D}}{1+\rho_{2}} 
+ \dfrac{\psi_{3}}{1+\rho_{3}}\lambda(M)\right) S_{D},\;
\lambda\left(M\right) = \dfrac{M}{M+C},
\end{align*}
and $\lambda_{1}, \ldots, \lambda_{12}$ represent the adjoint variables  
and  $y = \left(M, S_H, E_H, I_H, R_H, S_F, E_F, I_F, S_D,
E_D, I_D, R_D\right)$ are the state variables. 
The following result is a direct consequence
of Pontryagin's maximum principle  \cite{MR186436} 
and our results of Section~\ref{sec:ex:unq:oc}.

\begin{theorem}
\label{thm:PMP}
If the  optimal control to the problem formulated
in Section~\ref{sec:3.1} is $u^{*} = (u^{*}_1, u^{*}_2, u^{*}_3, u^{*}_4)$, 
then there exist corresponding adjoint variables 
$\lambda_{1}, \ldots, \lambda_{12}$ that satisfy the adjoint system
\begin{equation*}
\lambda_{j}'(t)
=-\dfrac{\partial H}{\partial y_j} 
\end{equation*}
subject to the transversality conditions
$$
\lambda_{j}\left(t_f\right)=0, 
$$
for $1\le j\le 12$. Furthermore, the optimal controls 
$u^{*}_i$, $i = 1, \ldots, 4$, are characterized by 
the maximality condition as follows:
\begin{equation}
\left\{
\begin{array}{llll}
u^{*}_1=\max\left\{0,\min\left(1,\;\;\dfrac{\left(\lambda_{{2}} 
-\lambda_{{1}} \right)\left( \tau_{{2}}I_{{D}}+\tau_{{1}}I_{{F}}
+{\dfrac {M\tau_{{3}}}{M+C}} \right) S_{{H}}+\left(\lambda_{{9}}
-\lambda_{{8}}\right) \left( G \right)S_{{D}}}{A_{1}}\right)\right\},\\\\
u^{*}_{2}=\max\left\{0,\min\left(1,\;\;\dfrac{\left(\lambda_{{9}}
-\lambda_{{8}}\right) \left( {\frac {\psi_{{1}}I_{{F}}}{1+\rho_
{{1}}}}+{\dfrac {\psi_{{2}}I_{{D}}}{1+\rho_{{2}}}}+{\dfrac {\psi_{{3}}M
}{ \left( M+C \right)  \left( 1+\rho_{{3}} \right) }} 
\right) S_{{D}}}{A_{2}}\right)\right\},\\\\
u^{*}_{3}=\max\left\{0,\min\left(1,\;\;\dfrac{\left(\lambda_{2}
-\lambda_{1}\right)\left(\tau_{1}I_{F}+\tau_{2}I_{D}
+\dfrac{\tau_{3}M}{M+C}\right)S_{H}}{A_{3}}\right)\right\},\\\\
u^{*}_{4}=\max\left\{0,\min\left(1,\;\;{\dfrac{E_{H}\left(\lambda_{{2}}
-\lambda_{{4}}\right)+E_{{D}}\left(\lambda_{{9}}
-\lambda_{{11}}\right)}{A_{4}}}\right)\right\},
\end{array}
\right.
\end{equation}
where
\begin{equation*}
G={\dfrac {\psi_{{1}}I_{{F}}}{1+\rho_
{{1}}}}+{\dfrac {\psi_{{2}}I_{{D}}}{1+\rho_{{2}}}}+{\dfrac {\psi_{{3}}M
}{ \left( M+C \right)  \left( 1+\rho_{{3}} \right) }}. 
\end{equation*}
\end{theorem}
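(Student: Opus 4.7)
The plan is to apply Pontryagin's Maximum Principle (PMP) directly, since the existence and uniqueness of the optimal control $u^{*}$ have already been secured in the two preceding theorems of Section~\ref{sec:ex:unq:oc}. Once $u^{*}$ is known to exist, PMP guarantees the existence of absolutely continuous adjoint functions $\lambda_1(t),\ldots,\lambda_{12}(t)$ satisfying the canonical adjoint equations $\lambda_j'(t)=-\partial H/\partial y_j$ evaluated along the optimal state-control pair. The transversality conditions $\lambda_j(t_f)=0$ follow because all terminal states are free (no endpoint constraint is imposed in Section~\ref{sec:3.1}) and because the objective functional \eqref{eqn80} contains no terminal payoff term, only a running cost.

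Next, I would derive the adjoint system component by component by differentiating the Hamiltonian $H$ in \eqref{eqn81} with respect to each of the twelve state variables in $y$. The routine care required is to apply the product and chain rules through the nonlinear incidence terms $\chi_1,\chi_2,\chi_3$, each of which contains the Michaelis--Menten environmental response $\lambda(M)=M/(M+C)$ with $\lambda'(M)=C/(M+C)^2$; this derivative will collect in the right-hand side of $\lambda_{12}'$ across the three incidence channels that couple humans, free-range dogs, and domestic dogs to the environmental compartment. The linear running-cost weights $K_1,K_2,K_3,K_4,K_5$ and $-K_6$ provide the corresponding constant forcing terms in the relevant adjoint equations.

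To characterize the optimal controls, I would use the minimality condition of PMP. Because $H$ is strictly convex in each $u_i$ (the quadratic coefficient $A_i/2$ is positive), the unique interior minimizer is obtained by solving $\partial H/\partial u_i=0$. For $u_1$, the only contributions linear in $u_1$ come from the factor $(1-(u_1+u_3))$ multiplying $\chi_1$ in $\dot S_H$ and $\dot E_H$, and from $(1-(u_1+u_2))$ multiplying $\chi_3$ in $\dot S_D$ and $\dot E_D$; grouping produces the stated numerator $(\lambda_2-\lambda_1)(\tau_1 I_F+\tau_2 I_D+\tau_3 M/(M+C))S_H+(\lambda_9-\lambda_8)G S_D$. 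Analogous bookkeeping gives the stationary expressions for $u_2^{*},u_3^{*},u_4^{*}$. Since the unconstrained minimizer may lie outside $[0,1]$, admissibility is enforced by the standard projection $u_i^{*}=\max\{0,\min(1,\cdot)\}$, whose correctness is guaranteed by the strict convexity of $H$ in each $u_i$.

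The principal obstacle is clerical rather than conceptual. With twelve state equations, four controls, three nonlinear incidences, and an environmental compartment coupled through the saturating function $\lambda(M)$, the sign bookkeeping in the adjoint system is delicate -- particularly for $\lambda_1',\lambda_2',\lambda_5',\lambda_6',\lambda_8',\lambda_9'$, in which each incidence enters $\dot S_{\cdot}$ and $\dot E_{\cdot}$ with opposite signs, and for $\lambda_{12}'$, where three copies of $\lambda'(M)=C/(M+C)^2$ must be correctly gathered from the distinct transmission pathways. Once these partial derivatives have been tabulated without sign error, the explicit characterization of $u^{*}$ follows by direct substitution and the min-max clipping.
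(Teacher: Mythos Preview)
Your proposal is correct and matches the paper's approach: the paper does not give a detailed proof at all, stating only that the result ``is a direct consequence of Pontryagin's maximum principle \cite{MR186436} and our results of Section~\ref{sec:ex:unq:oc},'' so your plan to invoke PMP for the adjoint system and transversality conditions, then solve $\partial H/\partial u_i=0$ and project onto $[0,1]$ by convexity, is exactly the intended (and in fact more explicit) argument.
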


In Section~\ref{sec:32} we follow \cite{MR4091761}
to implement Theorem~\ref{thm:PMP} 
in \textsf{Matlab} in an algorithm way.

% ---------------------------------------------  

\section{Quantitative analysis}
\label{sec:03}

Before numerical simulations, the parameters of the mathematical model \eqref{eqn79} 
are estimated, to understand the transmission dynamics of the system,
with data from Tanzania.

% -----------

\subsection{Model fitting and parameter estimation}
\label{sec:31}

To estimate  parameters $\Theta_{i}$, the model is fitted to real-world 
data using optimization techniques. The Nelder--Mead algorithm  was  employed  
to  minimize the difference between the model's predictions $\hat{Y}(\Theta_{i})$  
and the observed data $Y$. The estimated  parameters were obtained  using the least  
square fitting of  $I_{H}\left(t_{i}\right)$ through  discretization of  
the system of ordinary differential equations, 
that is, \eqref{eqn79} without controls, as follows:
\begin{equation}
\begin{aligned}
I_H(t_{i} + \Delta t) 
&= I_H(t_{i}) + \left( \beta_1 \cdot E_H(t_{i}) 
- (\sigma_1 + \mu_1) \cdot I_H(t_{i}) \right) \cdot \Delta t.
\end{aligned}
\end{equation}
The least-square fitting  is to minimize the object function given as
\begin{equation}
\begin{aligned}
\underset{\Theta_{i}}{\min} \, 
\text{MSE}(\Theta_{i}) = \frac{1}{n} \sum_{i=1}^{n} (Y_i - \hat{Y}_i(\Theta_{i}))^2,
\end{aligned}
\label{eqn92}
\end{equation}
where  $\text{MSE}(\Theta_{i})$ represents the Mean Squared Error, 
$n$ is the number of data points, while $Y_i$ and $\hat{Y}_i(\Theta_{i})$ 
are the observed and predicted values, respectively, at time $i$.

We have used data from Tanzania from 1990 to 2018: see Figure~\ref{fig:eghh}.
The  estimated model parameters that we obtained are shown in Table~\ref{T2},
together with some other values taken from the literature.
% ------------------
\begin{figure}
\centering
\includegraphics[scale=0.8]{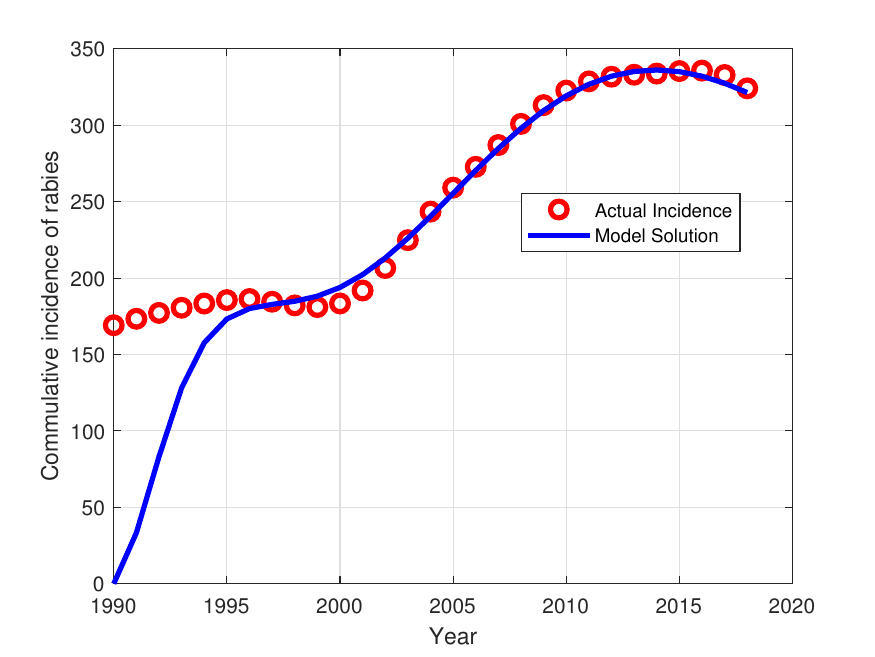}
\caption{\centering The comparison between the reported human rabies 
incidence in Tanzania from 1990 to 2018 and the simulation of 
$I_H\left(t\right)$ from our model \eqref{eqn79}.}
\label{fig:eghh}
\end{figure}

% ------------------------
\begin{center}
\begin{longtable}{|l|l|l|l|l|}
\caption{\centering Epidemilogical  parameters data  (Year$^{-1}$) for model  \ref{eqn79}.}\\  \hline 
\textbf{Parameters} & \textbf{Baseline value} & \textbf{Source} 
& \textbf{Estimated value}& $\textbf{Mean}\left(\mu \right) \textbf{and std}\left(\sigma\right) $\\
\hline\hline
\endfirsthead
\multicolumn{4}{c}
{\tablename\ \thetable\ -- \textit{Continued from previous page}} \\
\hline
\textbf{Parameters} & \textbf{Baseline value} & \textbf{Source} 
& \textbf{Estimated value}& $\textbf{Mean}\left(\mu \right) \textbf{and std}\left(\sigma\right) $\\
\hline
\endhead
\hline \multicolumn{4}{r}{\textit{Continued on next page}} \\
\endfoot
\hline
\endlastfoot
$\theta_{1}$ & 2000& (Estimated)&1993.382113 & $\mathcal{ N}\left(1996.691056\;\; 4.4679553 \right)$ \\
$\tau_{1}$ & 0.0004& \cite{tian2018transmission}&0.000405 &$\mathcal{ N}\left(0.000402\;\; 4\times10^{-6} \right)$.\\
$\tau_{2}$ & 0.0004& \cite{tian2018transmission}&0.000604&$\mathcal{ N}\left(0.000502\;\; 1.44\times10^{-4} \right)$.\\
$\tau_{3}$ & $\left[0.0003\;\;  0.0100\right]$ & (Estimated)&0.000303 &$\mathcal{ N}\left(0.000302\;\; 2\times10^{-6} \right)$ .\\
$\beta_{1}$ & $\frac{1}{6}$ & \cite{zhang2011analysis,tian2018transmission}&0.165581& $\mathcal{ N}\left(0.166124\;\; 7.68\times10^{-4} \right)$.\\
$\nu_3$ &0.001& (Estimated)&0.005735&$\mathcal{ N}\left(0.003367\;\; 3.3348\times10^{-3} \right)$.\\
$\beta_{2}$ & $\left[0.54 \;\; 1\right]$ & \cite{zhang2011analysis,abdulmajid2021analysis}&0.540487&$\mathcal{ N}\left(0.5402435\;\;
3.7815\times10^{-4} \right)$.\\
$\beta_{3}$ & 1& (Estimated)&0.999301&$\mathcal{ N}\left(0.9996505\;\; 1.6521\times10^{-4} \right)$.\\
$\mu_1$ & 0.0142& \cite{world2010working,world2013expert}&0.014417&$\mathcal{ N}\left(0.014309\;\; 1.53\times10^{-4} \right)$.  \\
$\sigma_1$ & 1& \cite{zhang2011analysis,abdulmajid2021analysis} &1.006332&$\mathcal{ N}\left(1.03166\;\; 4.47\times10^{-3} \right)$. \\
$\theta_{2}$ & 1000& (Estimated)&1004.12044&$\left(1002.060222\;\; 2.913594\right)$. \\
$\kappa_{1}$ &  0.00006 & (Estimated)&0.000020&$\mathcal{ N}\left(0.000040\;\; 2.8\times10^{-5} \right)$.\\
$\kappa_{2}$ & 0.00005 & (Estimated)&0.000081&$\mathcal{ N}\left(0.000066\;\; 2.2\times10^{-5} \right)$.\\
$\kappa_{3}$ & $\left[0.00001 \;\; 0.00003\right]$&(Estimated)&0.000040&$\mathcal{ N}\left(0.000025\;\; 2.1\times10^{-5} \right)$.\\
$\gamma$ & $\frac{1}{6}$ & \cite{zhang2011analysis,tian2018transmission,abdulmajid2021analysis} & 0.166374
&$\mathcal{ N}\left(0.166520\;\; 2.07\times10^{-4} \right)$.\\
$\nu_1$ &0.001& (Estimated)&0.001958&$\mathcal{ N}\left(0.001479\;\; 6.77\times10^{-4} \right)$.\\
$\sigma_2$ & 0.09 & \cite{zhang2011analysis,addo2012seir}&0.089556&$\mathcal{ N}\left(0.089778\;\; 3.14\times10^{-4} \right)$.\\
$\mu_4$  &0.08& (Estimated)&0.080625&$\mathcal{ N}\left(0.080313\;\; 4.42\times10^{-4} \right)$. \\
$\mu_{2}$ & 0.067 & (Estimated)&0.066268 &$\mathcal{ N}\left(0.066634\;\; 1.58\times10^{-4} \right)$.\\
$\theta_3$ & 1200& (Estimated)&1203.844461&$\mathcal{ N}\left(1201.922230\;\; 2.718444 \right)$.\\
$\psi_{1}$ &0.0004& \cite{hampson2019potential,addo2012seir}&0.000077&$\mathcal{ N}\left(0.000238\;\; 2.28\times10^{-4} \right)$.\\
$\psi_{2}$ & 0.0004 & \cite{hailemichael2022effect}&0.000066&$\mathcal{ N}\left(0.000233\;\; 2.36\times10^{-4} \right)$. \\
$\psi_{3}$ & 0.0003 & (Estimated)&0.000030&$\mathcal{ N}\left(0.0003\;\; 1.91\times10^{-4} \right)$.\\
$\mu_3$ &0.067& (Estimated)&0.080129&$\mathcal{ N}\left(0.073565\;\; 8.056\times10^{-3} \right)$. \\
$\sigma_3$ &0.08& \cite{zhang2011analysis}&0.091393 & $\mathcal{ N}\left(0.085697\;\; 8.056\times10^{-3} \right)$. \\
$\gamma_1$ & $\frac{1}{6}$ & \cite{zhang2011analysis,tian2018transmission}&0.172489&$\mathcal{ N}\left(0.169578\;\; 4.117\times10^{-3} \right)$. \\
$\gamma_2$ & 0.09 & \cite{zhang2011analysis}&0.090308&$\mathcal{ N}\left(0.090154\;\; 2.18\times10^{-4} \right)$.\\
$\gamma_3$ & 0.05&(Estimated)&0.050128&$\mathcal{ N}\left(0.050128\;\; 9.1\times10^{-5} \right)$.\\
$\nu_2$ &0.006& (Estimated)&0.008971&$\mathcal{ N}\left(0.007485\;\; 2.101\times10^{-3} \right)$.\\
$\rho_{1}$ & 10& \cite{ruan2017spatiotemporal}&9.920733&$\mathcal{ N}\left(9.960366\;\; 5.605\times10^{-2} \right)$.\\
$\rho_{2}$ & 8& (Estimated)&8.116421&$\mathcal{ N}\left(8.058211\;\; 8.2322\times10^{-2} \right)$.\\
$\rho_{3}$ & 15 & (Estimated)&14.917005 &$\mathcal{ N}\left(14.958502\;\; 5.8686\times10^{-2} \right)$.\\
$C$  &0.003  (PFU)/mL & (Estimated)&0.003011&$\mathcal{ N}\left(0.003005\;\; 8.0000\times10^{-6} \right)$.
\label{T2}
\end{longtable}
\end{center}

% ------------------------

\subsection{Impact of contact rate, Optimal control Strategy and $\mathcal{R}_e$}

Mesh and contour plots are shown in Figures~\ref{Fig12} and \ref{Fig13}.  
They illustrate the relationship between $\mathcal{R}_e$ and $u_1\left(t\right)$,  
which measures the impact of promoting  good health practice and management;
$u_2\left(t\right)$, which represents control effort due to vaccination 
of domestic dogs; and $u_4\left(t\right)$, which  measures the  treatment 
effort given to people bitten or scratched by a potentially rabid animal 
(PREP \& PEP), along with the recovery rate as a result of treating infected individuals. 
It is observed that the magnitude of $\mathcal{ R}_e$ decreases with an increase in 
$u_1\left(t\right)$, $u_2\left(t\right)$, and $u_4\left(t\right)$. This observation 
highlights the effectiveness of $u_1\left(t\right)$, $u_2\left(t\right)$,  
and $u_4\left(t\right)$  interventions in reducing the transmission 
of rabies  across the community.
% ----------------------------------------
\begin{figure}[H]
\begin{minipage}[b]{0.45\textwidth}
\includegraphics[height=5.0cm, width=7.5cm]{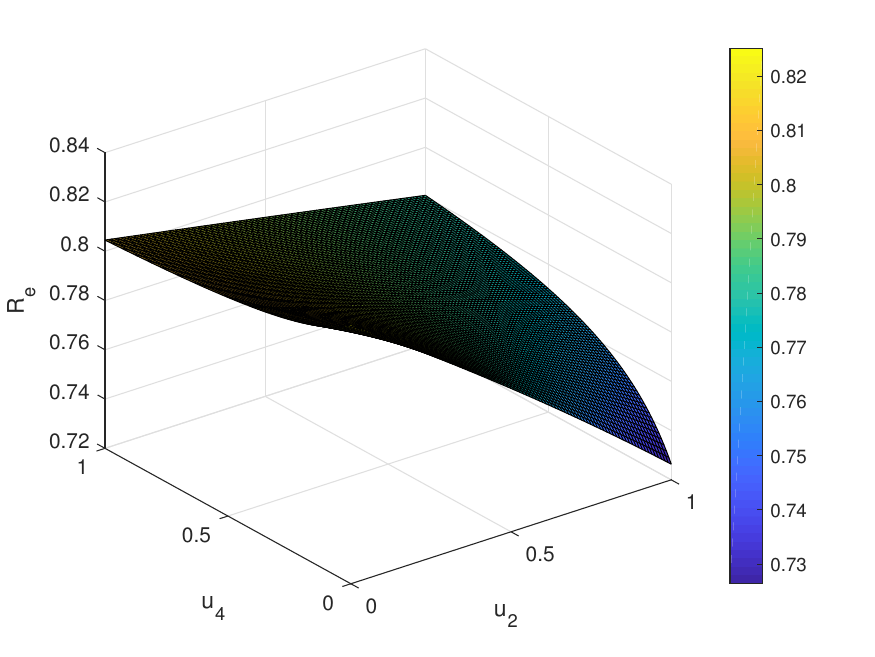}
\centering{(a)}
\end{minipage}
\begin{minipage}[b]{0.45\textwidth}
\includegraphics[height=5.0cm, width=7.5cm]{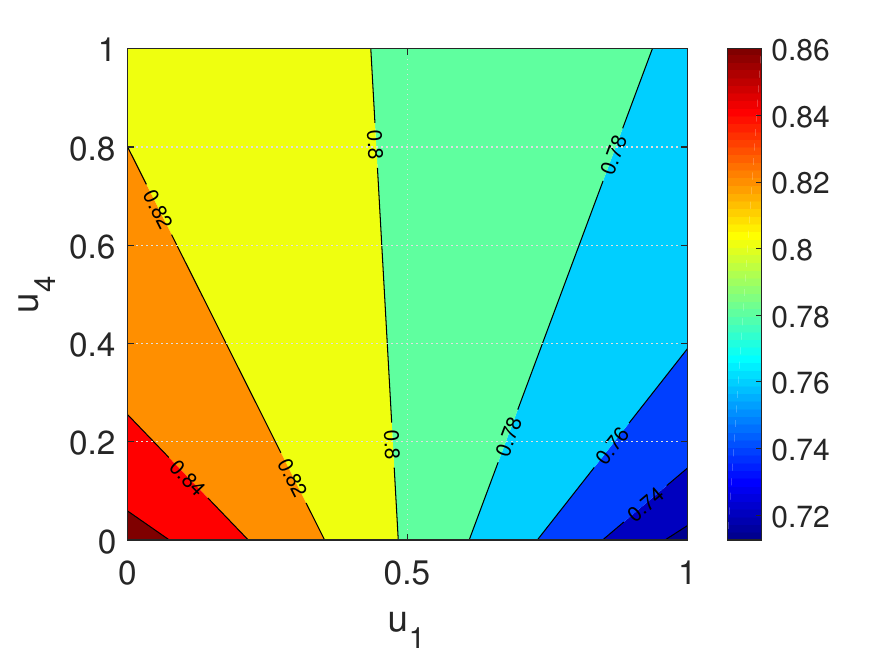}
\centering{(b)}
\end{minipage}
\centering
\caption{\centering Mesh and contour plots showing the combined influence 
of parameters on $\mathcal{ R}_e$. 
(a) $\mathcal{ R}_e$ in terms of $u_2$ and $u_4$. 
(b)  $\mathcal{ R}_e$ in terms of $u_1$ and $u_4$.}
\label{Fig12}
\end{figure}
\begin{figure}[H]
\begin{minipage}[b]{0.45\textwidth}
\includegraphics[height=5.0cm, width=7.5cm]{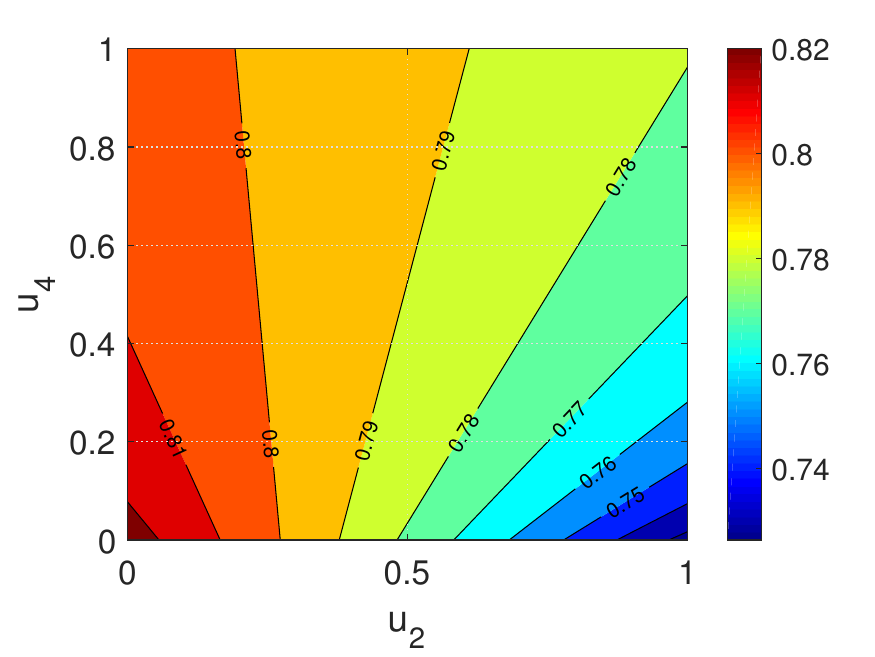}
\centering{(a)}
\end{minipage}
\begin{minipage}[b]{0.45\textwidth}
\includegraphics[height=5.0cm, width=7.5cm]{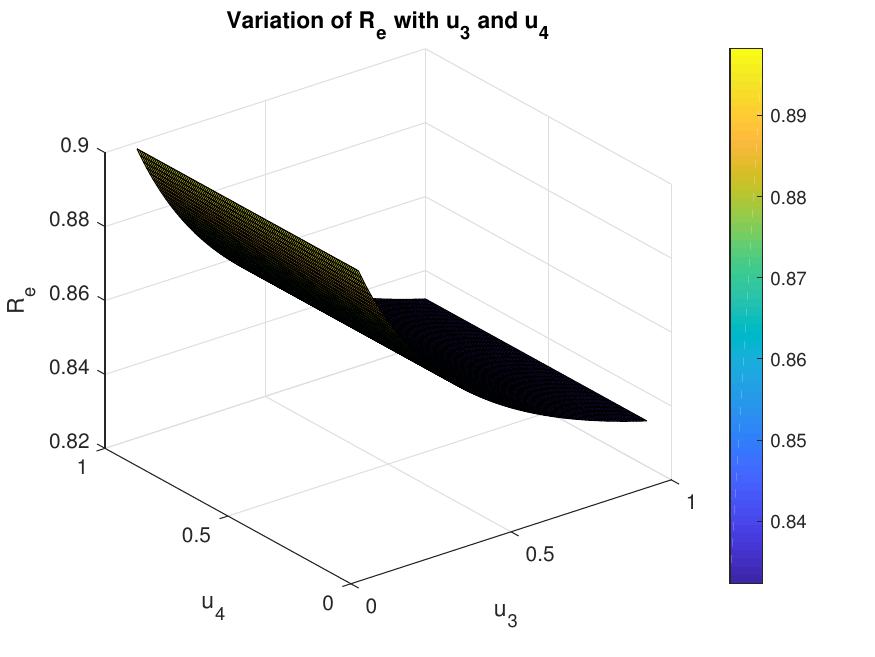}
\centering{(b)}
\end{minipage}
\centering
\caption{\centering  Mesh and contour plots showing the combined influence 
of parameters on  $\mathcal{ R}_e$. 
(a) $\mathcal{ R}_e$ in terms of $u_2$ and $u_4$. 
(b)  $\mathcal{ R}_e$ in terms of $u_3$ and $u_4$.}
\label{Fig13}
\end{figure}
% ----------------------------------------

The mesh and contour plot representing $\mathcal{R}e$ as a function 
of $\psi_1$ and $\psi_{2}$, which influence the effective reproduction 
number in the dynamics of rabies transmission, are shown in Figure~\ref{Fig10}. 
The results demonstrate that the effectiveness increases along with the values 
of $\psi_1$ and $\psi_{2}$. This indicates that increased rates of contact 
have a positive effect on the dynamics of rabies transmission in domestic dogs.
% ----------------------------------------
\begin{figure}[H]
\begin{minipage}[b]{0.45\textwidth}
\includegraphics[height=5.0cm, width=7.5cm]{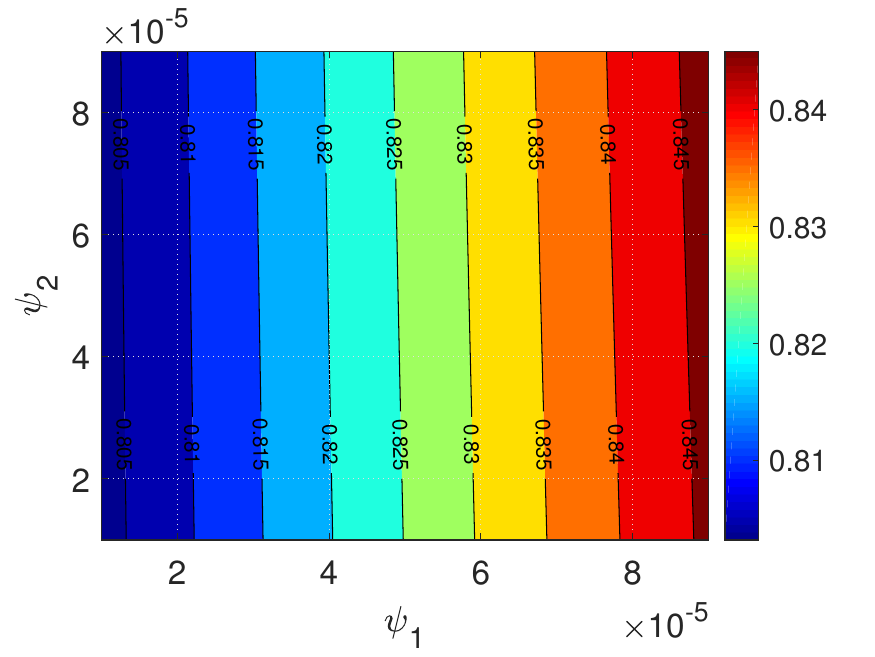}
\centering{(a)}
\end{minipage}
\begin{minipage}[b]{0.45\textwidth}
\includegraphics[height=5.0cm, width=7.5cm]{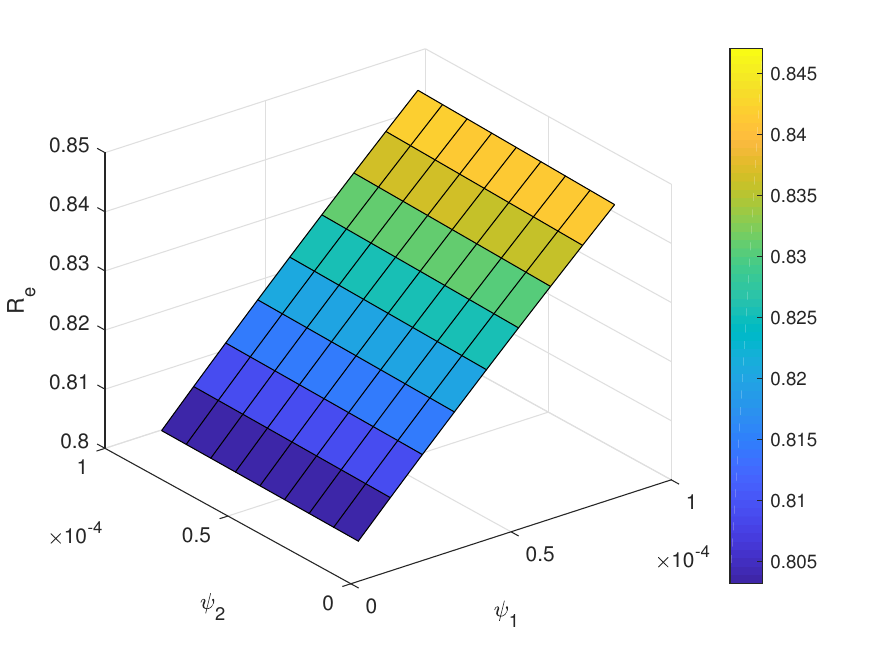}
\centering{(b)}
\end{minipage}
\centering 
\caption{\centering  Variations in $\mathcal{ R}_e$  
with respect to changes on (a) the rate that $S_D$ gets infection from 
$I_F$ and  $\psi_1$; (b) the rate that $S_D$ gets infection from 
$I_D$ and  $\psi_1$.}
\label{Fig10}
\end{figure}

% -----------------

\subsection{Impact of deterrence factors on the transmission of rabies in domestic dogs}

Figure~\ref{Fig2} demonstrates that an increase in deterrence factors leads 
to a reduction on the number of  exposed and  infected domestic dogs. 
This suggests that deterrence factors exert a significant influence 
on the transmission dynamics of rabies in domestic dogs.
\begin{figure}[H]
\includegraphics[scale=0.6]{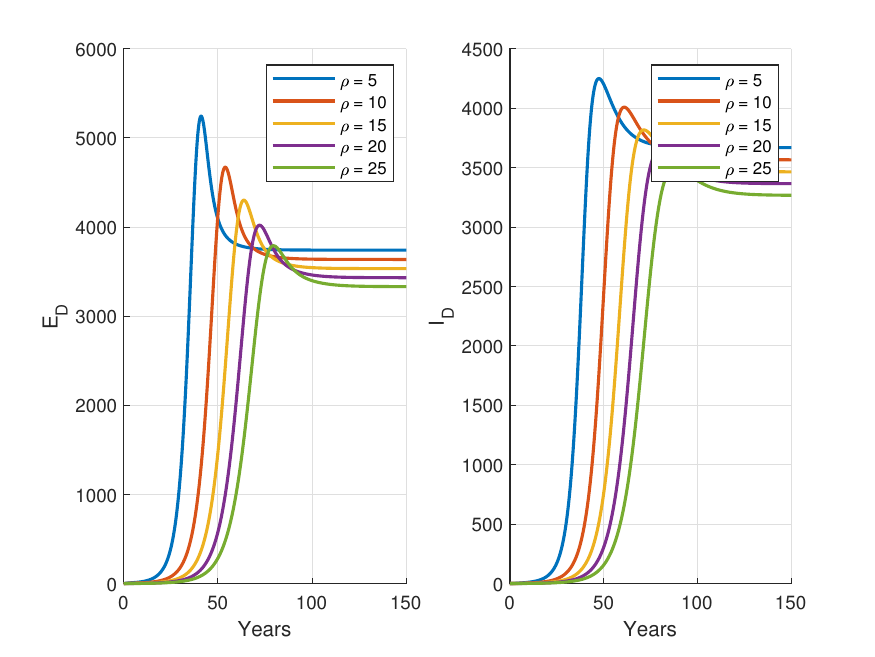}
\centering
\caption{\centering Impact of the deterrent factor $\rho$ 
on the transmission  dynamics of rabies in domestic dogs.}
\label{Fig2}
\end{figure}

% ---------------------------------------

\subsection{Numerical simulations} 
\label{sec:32}

Now we conduct numerical simulations involving an optimal control model 
to analyze the dynamics and management of rabies in humans, as well as in 
free-range and domestic dog populations. The optimal control solution is obtained by 
solving the optimality system, which consists of the adjoint system 
and the state system, according with Theorem~\ref{thm:PMP}. To solve such system,
the fourth-order Runge--Kutta iterative scheme method is employed, 
initializing controls over the simulated time.
For solving the adjoint equations, the backward fourth-order Runge--Kutta scheme is employed, 
leveraging existing solutions of the state equations in accordance with the transversality 
conditions \cite{yusuf2022optimal}. Furthermore, updates to the controls $\left(u_1\;,u_2\;,u_3\;,u_4\right)$ 
are made using a convex combination of the prior controls and values from the characterizations 
\cite{yusuf2022optimal}. This iterative process continues until the values of unknowns 
in subsequent iterations closely approximate those in the current iteration 
\cite{sagamiko2015optimal}. Our \textsf{Matlab} implementation follows the one  
discussed in \cite{MR4091761}. The  values  used  for  numerical   simulation   
are shown in Table~\ref{T2}. 

We incorporate four time dependent control variables that are: $u_1\left(t\right)$, 
which measures the effect of promoting good health practice and management 
(surveillance and monitoring) in reducing the possibility of human and dogs infection  
with rabies; $u_2\left(t\right)$, which represents the control effort due to vaccination 
of domestic dogs; $u_3\left(t\right)$, which measures the impact of educating 
communities about rabies transmission risks and protective measures to avoid 
exposure (public awareness and education); $u_4\left(t\right)$, which represents the 
treatment effort to individuals exposed to suspected rabid animals through bites 
or scratches (Post-Exposure Prophylaxis-PEP). Strategies for eliminating rabies 
in humans and dog populations have been ranked based on their effectiveness, 
starting from the least effective and advancing to the most effective as follows.

% ---------------------------------------

\subsubsection*{Strategy A: optimal control with all the controls $u_i$, $i=1,2,3,4$.}

Here we measure the impact of promoting good health practices and management, 
vaccinating domestic dogs, educating communities, and lastly, providing treatment for
those who have been in contact with a potentially rabid animal through bites or scratches 
(Post-Exposure Prophylaxis-PEP). This strategy aims to eradicate rabies 
within the community by focusing on four 
key interventions: promoting good health practices and management, denoted by $u_1(t)$; 
vaccinating domestic dogs, represented by $u_2(t)$; educating communities, 
captured by $u_3(t)$; and administering treatment to individuals exposed 
to potentially rabid animals through Post-Exposure Prophylaxis (PEP), 
modeled by $u_4(t)$. The results demonstrate that the strategy effectively eliminates 
rabies within five years (see Figures~\ref{C1H}, \ref{CT1D}, and \ref{CT1F}), 
with optimal utilization of $u_2(t)$ and $u_4(t)$ at 100\% during the first 16 and 6 years respectively, 
followed by a gradual reduction to non-zero. Meanwhile, $u_1(t)$ and $u_3(t)$ declined over time, 
reflecting the fact that vaccination does not confer permanent immunity, 
as illustrated in the control profile.
\begin{figure}[H]
\centering
\includegraphics[scale=0.8]{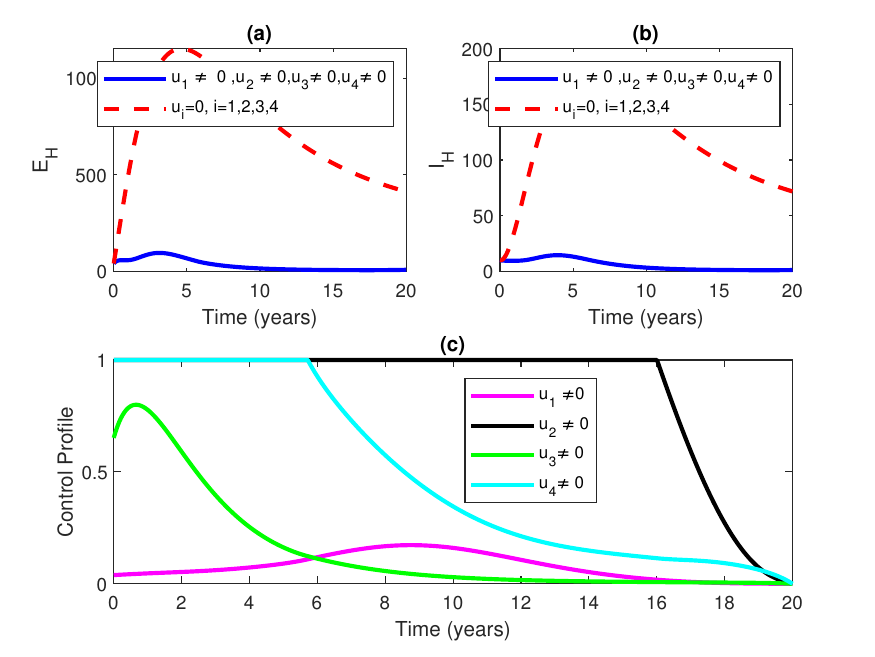}
\caption{\centering Impact of Strategy~A to human population.}
\label{C1H}
\end{figure}
\begin{figure}[H]
\centering
\includegraphics[scale=0.8]{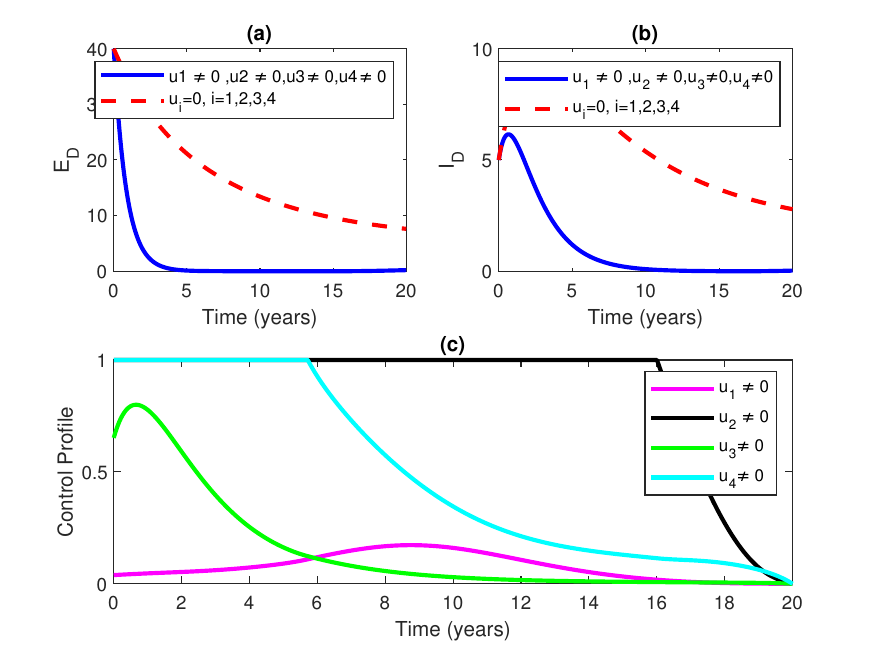}
\caption{\centering Impact of Strategy~A to domestic dogs population.}
\label{CT1D}
\end{figure}
\begin{figure}[H]
\centering
\includegraphics[scale=0.8]{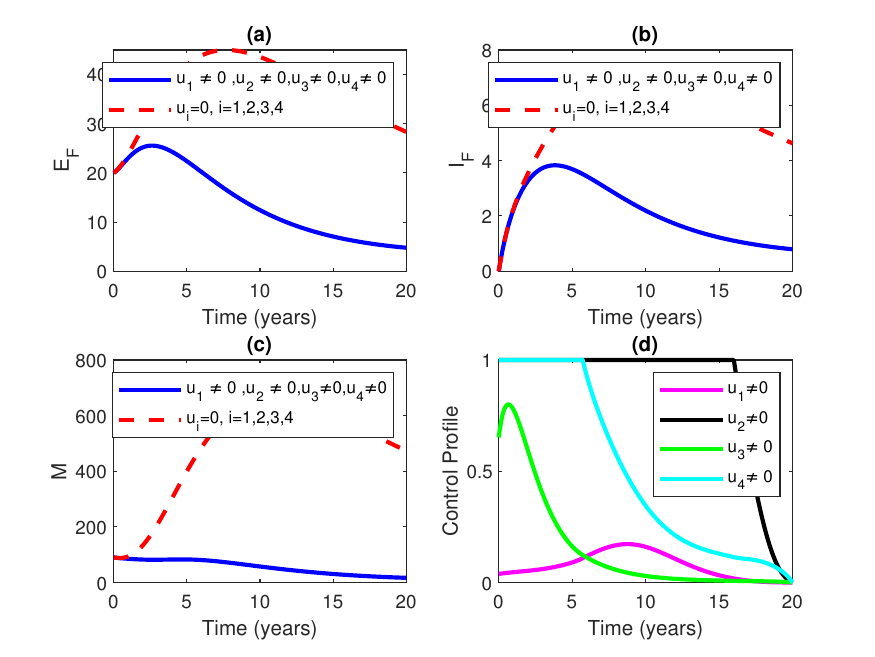}
\caption{\centering Impact of Strategy~A to free-range dogs and rabies virus.}
\label{CT1F}
\end{figure}

% ---------------------------------------

\subsubsection*{Strategy B: optimal control with controls $u_3$ and $u_4$.}

We assess the outcomes of public education and awareness campaigns and the availability 
of treatment for people exposed to  potential rabies via animal 
bites or scratches (Post-Exposure Prophylaxis-PEP).
The results are given in Figures~\ref{C3H}, \ref{CT3D} and \ref{CT3F}.
\begin{figure}[H]
\centering
\includegraphics[scale=0.8]{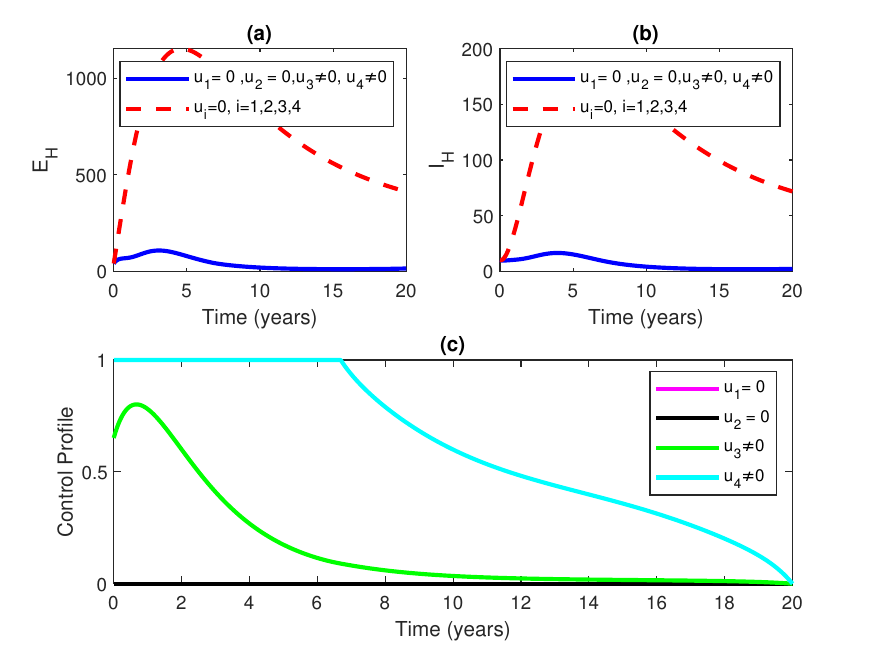}
\caption{Impact of Strategy~B to human population.}
\label{C3H}
\end{figure}
\begin{figure}[H]
\centering
\includegraphics[scale=0.8]{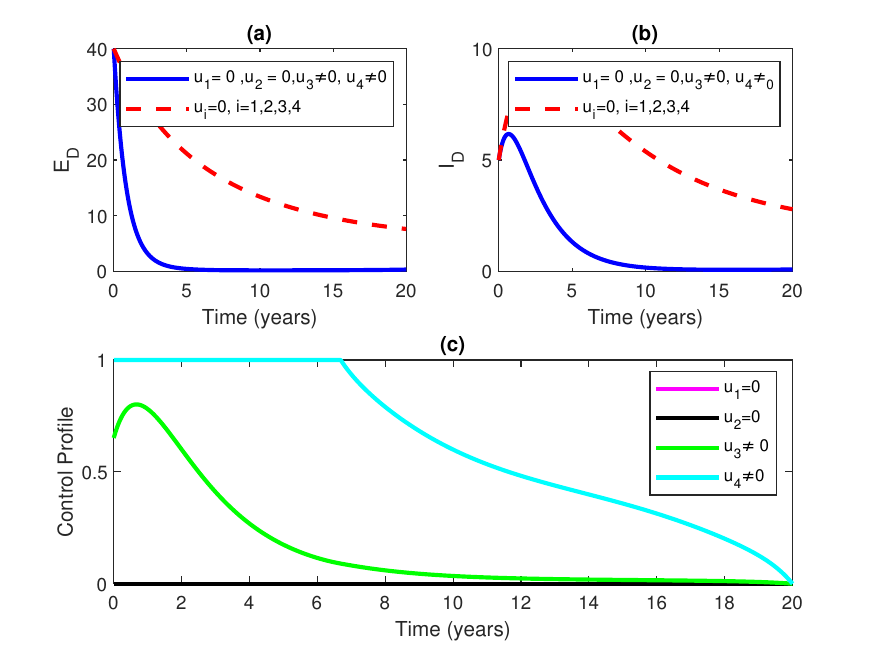}
\caption{Impact of Strategy~B to domestic dogs population.}
\label{CT3D}
\end{figure}
\begin{figure}[H]
\centering
\includegraphics[scale=0.8]{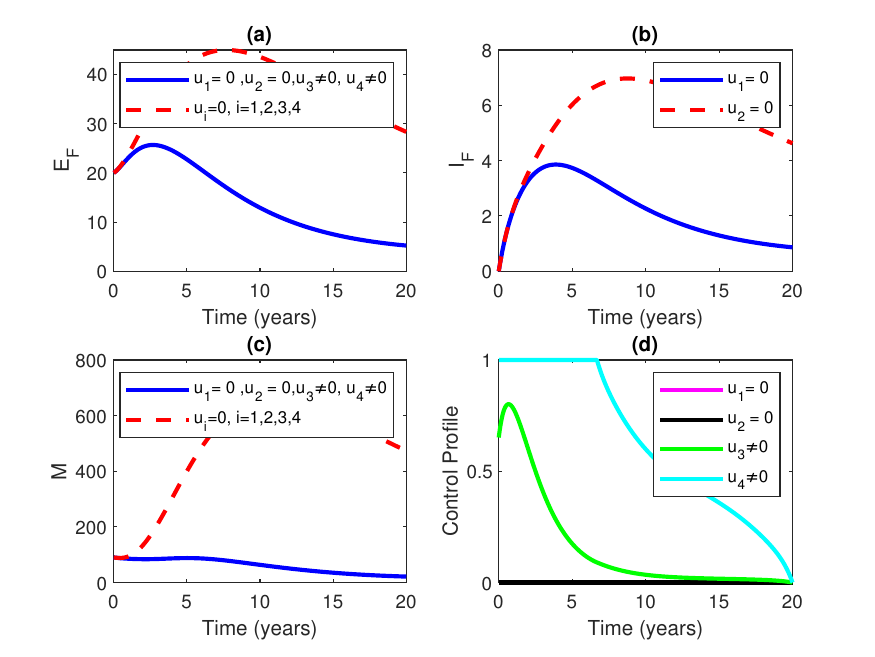}
\caption{Impact of Strategy~B to free-range dogs and rabies virus.}
\label{CT3F}
\end{figure}
  
% ---------------------------------------  
  
\subsubsection*{Strategy C: optimal control using only control $u_4$.}

Now we concentrate on the impact of providing treatment  for bitten 
or scratched individuals by an animal suspected to be rabid  
(Post-Exposure Prophylaxis-PEP). Figures~\ref{C4H}, \ref{CT4D}, 
and \ref{CT4F} illustrate that the timely administration 
of PEP at 100\% effectiveness plays a pivotal role 
in reducing rabies mortality and curbing disease transmission 
within the first five years. By treating individuals immediately after exposure, 
PEP significantly reduces the risk of death and halts secondary transmission, 
particularly in high-risk areas. The results also reveal that PEP complements 
long-term rabies control measures, such as vaccination and health education.
While these strategies are vital, PEP offers immediate protection, 
ensuring effective containment of the disease even after exposure.
\begin{figure}[H]
\centering
\includegraphics[scale=0.8]{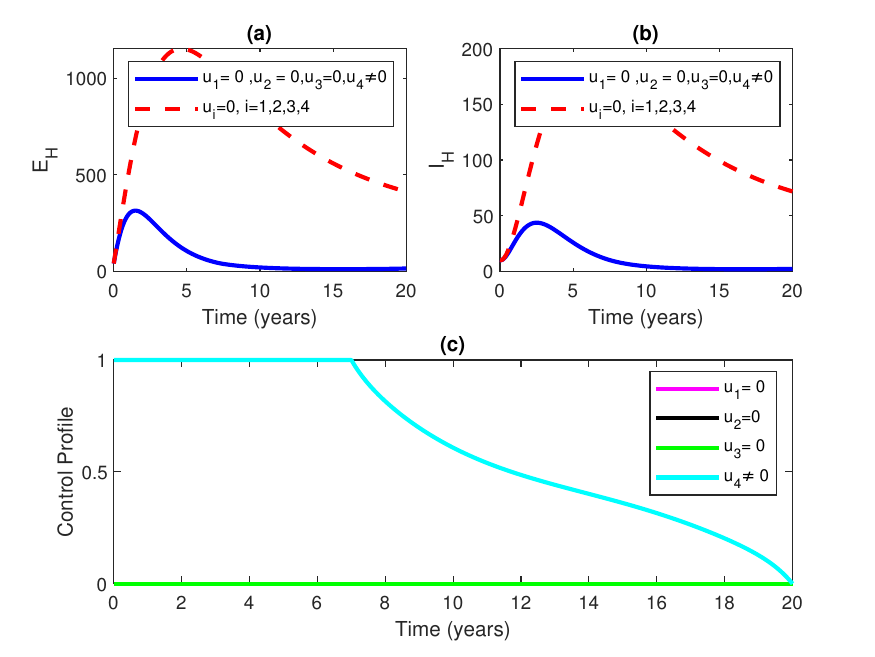}
 \caption{Impact of Strategy~C to human population.}
\label{C4H}
\end{figure}
\begin{figure}[H]
\centering
\includegraphics[scale=0.8]{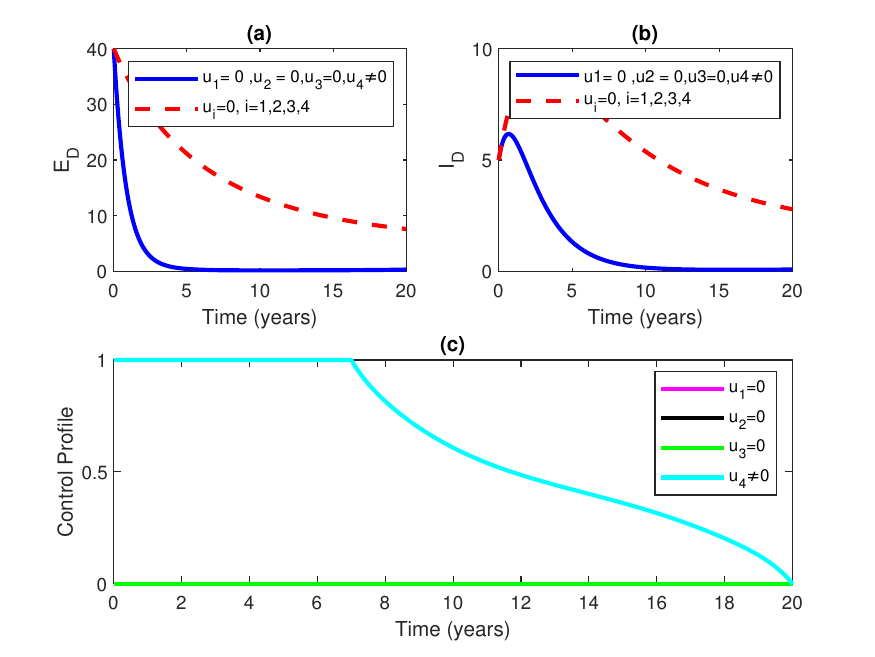}
\caption{Impact of Strategy~C to domestic dogs population.}
\label{CT4D}
\end{figure}
\begin{figure}[H]
\centering
\includegraphics[scale=0.8]{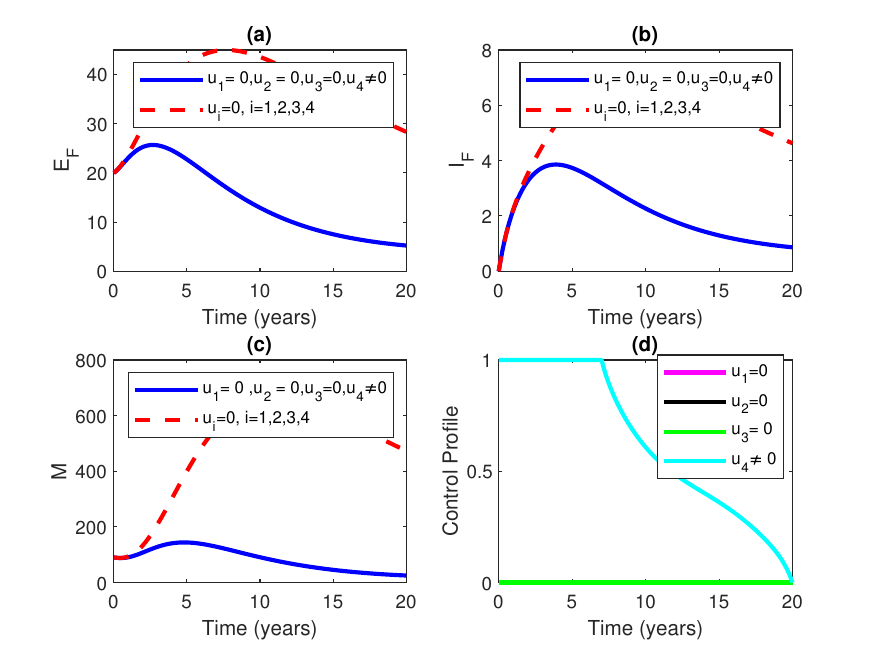}
\caption{Impact of Strategy~C to free-range dogs and rabies virus.}
\label{CT4F}
\end{figure}
    
% ---------------------------------------------  

\subsubsection*{Strategy D: optimal control with controls $u_1$ and $u_2$.}  

Finally, we consider the impact of promoting good health practices 
and management (surveillance and monitoring) and vaccinating on domestic dogs.
This strategy focuses on the combined impact of promoting good health practices 
and management, including surveillance and monitoring, represented by $u_1(t)$, 
along with the vaccination of domestic dogs, denoted by $u_2(t)$. 
The results indicate that the strategy is most effective when both control measures 
are applied at 100\%, as illustrated in Figures~\ref{CT6H}, \ref{CT6D}, and \ref{CT6F}. 
The effectiveness of the strategy was observed between the 4th and 14th years, 
followed by a decline after the reduction in control application. These findings 
suggest that promoting good health practices and vaccination can significantly 
reduce periodic rabies transmission. However, the data also highlights that vaccinating 
domestic dogs alone is not sufficient to fully prevent cyclical rabies transmission, 
primarily due to the movement of free-roaming dogs that may travel 
to rabies-free areas and initiate new outbreaks.
\begin{figure}[H]
\centering
\includegraphics[scale=0.8]{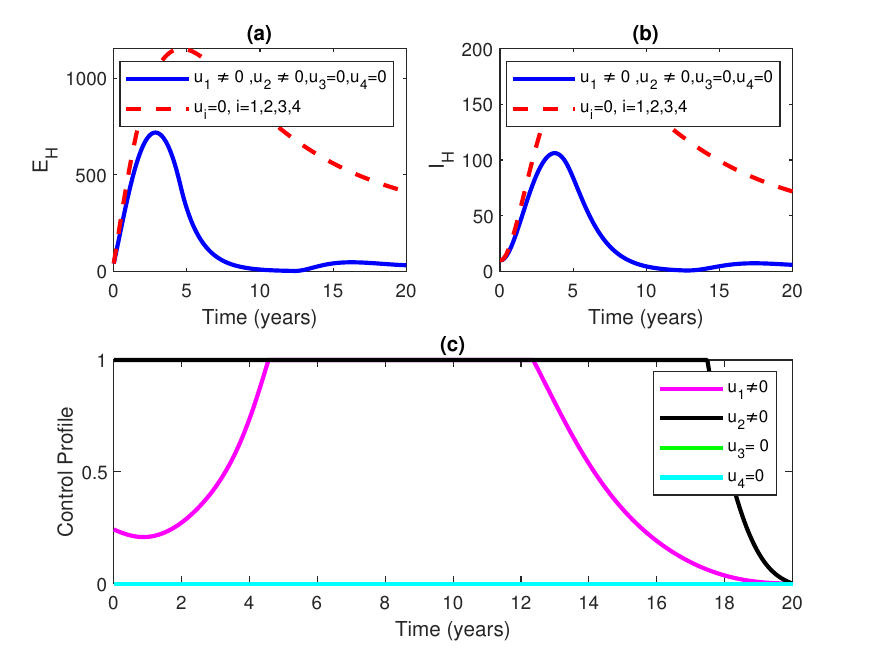}
\caption{\centering Impact of Strategy~D to human population.}
\label{CT6H}
\end{figure}
\begin{figure}[H]
\centering
\includegraphics[scale=0.8]{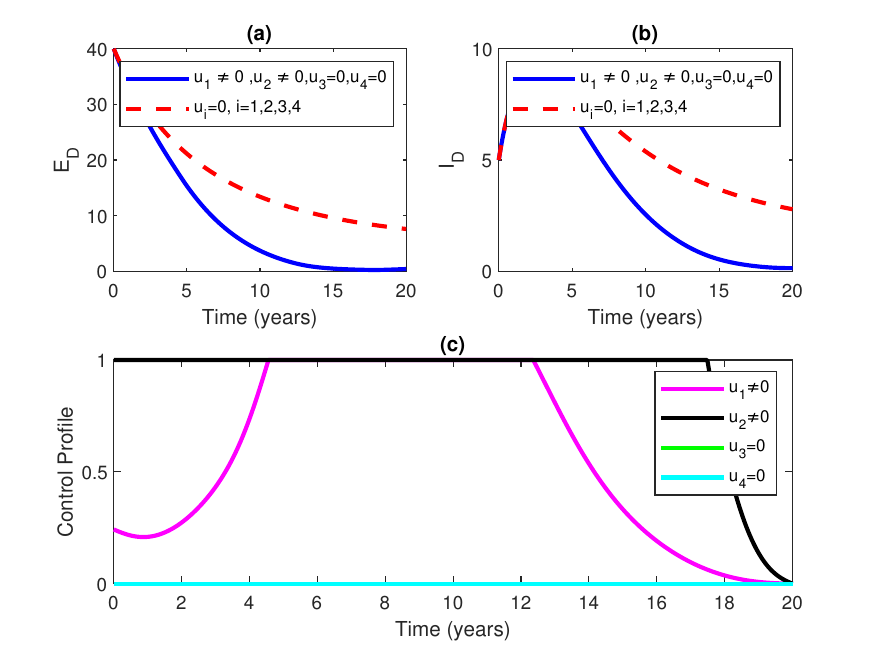}
\caption{\centering Impact of Strategy~D to domestic dogs population.}
\label{CT6D}
\end{figure}
\begin{figure}[H]
\centering
\includegraphics[scale=0.8]{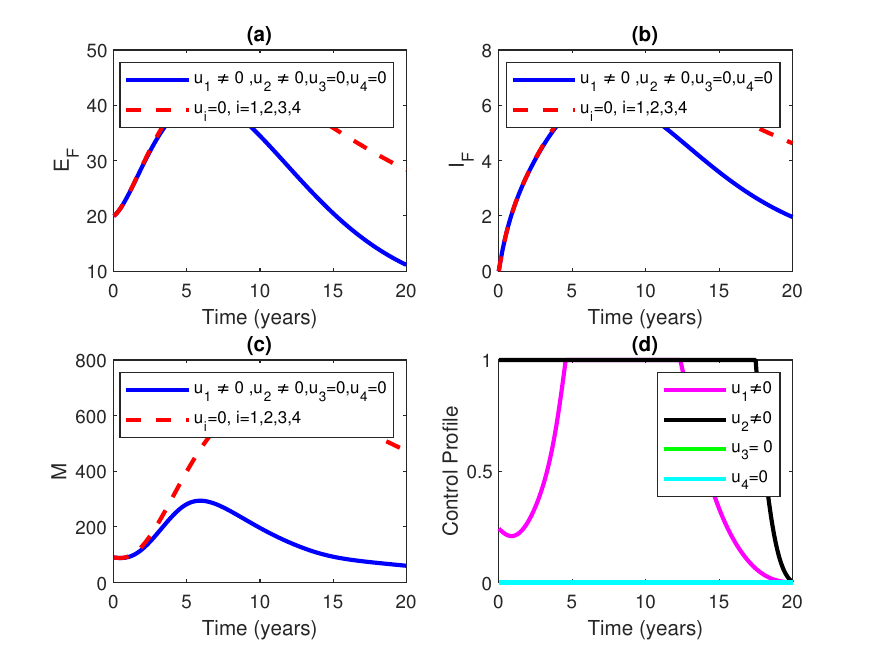}
\caption{Impact of Strategy~D to free-range dogs and rabies virus.}
\label{CT6F}
\end{figure}
   
% ---------------------------------------------  

\section{Conclusion}
\label{sec:disc:conc}

In this work, a new deterministic mathematical model for rabies disease, 
incorporating time-dependent control measures, was formulated and rigorously analyzed. 
The comparison between our effective reproduction rabies number $\mathcal{R}_e$ and 
the basic reproduction number $\mathcal{R}_0$ of \cite{charles2024mathematical},
underscores the importance of implementing robust control measures to reduce the effective 
reproduction number and control rabies transmission. The findings highlight the effectiveness  
of promoting good health practices, vaccinating domestic dogs, and providing treatment  
to exposed individuals. Among the control strategies, vaccination and PEP treatment 
stand out  as the most impactful measures for reducing the rabies spread. 
However, it is clear that relying on a single intervention may not be sufficient to fully 
eliminate rabies transmission. A combination of interventions, including vaccination, 
health education, and treatment, proves to be the most effective approach. Additionally, 
controlling the movement of free-roaming dogs and implementing deterrence measures further 
enhances the success of rabies elimination strategies. Ultimately, sustained control efforts, 
especially in regions with high rates of dog-to-dog and dog-to-human contact, are necessary 
to achieve long-term rabies eradication. A holistic approach that incorporates vaccination, 
education, treatment, and deterrence can significantly reduce the transmission dynamics 
and bring the effective reproduction number below the critical threshold 
needed to eliminate rabies.

% ---------------------------------------------  

\appendix

\section{Global stability of Endemic Equilibrium point $E^{*}$}
\label{AppA}

Follows the proof of the result asserted in Section~\ref{sec:2.2.4}.

\begin{theorem}
\label{thm:06}
The rabies disease  endemic  equilibrium point  of  model \eqref{eqn79} 
is globally asymptotically  stable if $\mathcal{R}_e>1$.
\end{theorem}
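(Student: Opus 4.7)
The plan is to establish global asymptotic stability of $E^{*}$ by constructing a Goh--Volterra (logarithmic) Lyapunov function and applying LaSalle's invariance principle, closely following the method used in \cite{charles2024mathematical} for the uncontrolled version and adapting it to accommodate the four time-independent control values (at the equilibrium we fix $u_i$ constant) together with the saturation term $\lambda(M)=M/(M+C)$. Define
\begin{equation*}
V \;=\; \sum_{i=1}^{12} c_i\,\Bigl(x_i - x_i^{*} - x_i^{*}\,\ln\tfrac{x_i}{x_i^{*}}\Bigr),
\end{equation*}
where $x_i$ ranges over the twelve state variables of \eqref{eqn79} and $c_1,\dots,c_{12}>0$ are weights to be chosen. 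Because the scalar map $g(y)=y-1-\ln y$ satisfies $g(y)\ge 0$ with equality iff $y=1$, the functional $V$ is positive definite on $\Omega$ with a unique minimum at $E^{*}$.

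First, I would differentiate along trajectories, obtaining $\dot V=\sum c_i\bigl(1-x_i^{*}/x_i\bigr)\dot x_i$, and eliminate the constant recruitment rates $\theta_1,\theta_2,\theta_3$ by substituting the endemic equilibrium relations obtained from setting the right-hand side of \eqref{eqn79} to zero at $E^{*}$ (for instance, $\theta_1=\mu_1 S_H^{*}-\beta_3 R_H^{*}+(1-u_1-u_3)(\tau_1 I_F^{*}+\tau_2 I_D^{*}+\tau_3\lambda(M^{*}))S_H^{*}$, and analogously for $\theta_2,\theta_3$). After this substitution, the cross terms rearrange into groups of the form
\begin{equation*}
\Bigl(n - \tfrac{a_1}{a_1^{*}} - \tfrac{a_1^{*} a_2}{a_1 a_2^{*}} - \cdots - \tfrac{a_{n-1}^{*}}{a_{n-1}}\Bigr),
\end{equation*}
each of which is non-positive by the arithmetic--geometric mean inequality (equivalently, by $1-y+\ln y\le 0$). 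The saturation term is handled by writing $\lambda(M)-\lambda(M^{*})$ in a form that exploits the monotonicity and concavity of $\lambda$, namely $\lambda(M)/\lambda(M^{*})\le (M/M^{*})\cdot(M^{*}+C)/(M+C)$, which lets the environmental contribution be absorbed into a $g(M/M^{*})$--type expression.

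The weights $c_i$ must then be calibrated so that the coupling between the four subsystems (humans, free-range dogs, domestic dogs, environment) cancels, leaving only non-positive sums. Balancing yields choices proportional to the reciprocals of the transition rates out of each exposed or infectious compartment (e.g.\ $c_{E_H}\propto 1/(\mu_1+\beta_1+\beta_2+u_4)$, $c_{I_H}$ proportional to the force-of-infection weight, and similarly for the dog and environmental compartments), so that terms like $(1-u_1-u_3)\tau_2 I_D^{*}S_H^{*}$ appearing in $\dot V$ collect into complete log--geometric mean groups. With the $c_i$ so chosen, one concludes $\dot V\le 0$ on $\Omega$, with $\dot V=0$ forcing all ratios $x_i/x_i^{*}$ to equal one.

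Finally, invoking LaSalle's invariance principle, the largest invariant subset of $\{\dot V=0\}\cap\Omega$ is the singleton $\{E^{*}\}$, so every solution starting in the biologically meaningful region converges to $E^{*}$, which is guaranteed to exist and be unique by Theorem~\ref{The} when $\mathcal R_e>1$. The main technical obstacle I expect is the explicit determination of the weights $c_1,\dots,c_{12}$ and the careful algebraic handling of the saturation function $\lambda(M)$, since this term is the principal deviation from standard bilinear incidence models and can obstruct the clean formation of arithmetic--geometric mean groups; if necessary, an auxiliary quadratic correction in $(M-M^{*})^{2}$ could be added to $V$ to close the estimate.
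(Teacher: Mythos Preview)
Your proposal is correct and follows essentially the same route as the paper: a Goh--Volterra logarithmic Lyapunov function over all twelve compartments, substitution of the endemic equilibrium relations to eliminate $\theta_1,\theta_2,\theta_3$, grouping of cross terms via the inequality $1-y+\ln y\le 0$ (equivalently AM--GM), and conclusion by LaSalle's invariance principle. The only minor deviation is that the paper simply sets all weights $Q_i=1$ and zeros out certain coefficients by fiat rather than carrying out the calibrated choice you outline, and it treats the saturation $\lambda(M)$ less carefully than you propose; your version is, if anything, more thorough in exactly the places you flagged as potential obstacles.
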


\begin{proof}
We use the Lyapunov function of the model system \eqref{eqn79} as described by 
\cite{charles2024mathematical,stephano2024significance,gumucs2025cost}. 
The Lyapunov function $\mathcal{H}$ is defined by
\begin{equation*}
\begin{aligned}
\mathcal{H}=\sum_{i=1}^{12} Q_{i}\left(y_{i}-y^{*}_{i}-y^{*}_{i}
\ln\left(\dfrac{y_{i}}{y^{*}_{i}}\right)\right),
\end{aligned}
\end{equation*}
where  $Q_{i}$ denotes an undetermined positive constant,  
$y_{i}$ represents the population of the  $i^{th}$  compartment, 
and $y^{*}_{i}$  indicates the endemic equilibrium point of model  
\eqref{eqn79}. It is apparent that the function  $\mathcal{H}$   
satisfies all the criteria for being a Lyapunov function:
\begin{itemize}
\item[(i)]  at the equilibrium point, $\mathcal{H}=0$ , thus 
$E^{*}\left(S^{*}_{H},\;E^{*}_{H},\;I^{*}_{H},\;R^{*}_{H},\;S^{*}_{F},\;
E^{*}_{F},\;I^{*}_{F},\;S^{*}_{D},\;E^{*}_{D},\;I^{*}_{D},\;R^{*}_{D},\;M\right)$;
\item[(ii)] $\mathcal{H}$ is positive for all other values of 
$S_{H},\;E_{H},\;I_{H},\;R_{H},\;S_{F},\;E_{F},\;I_{F},\;S_{D},\;E_{D},\;I_{D},\;R_{D}$ and $M$.
\end{itemize}
A Lyapunov function $\mathcal{H}$ of the model system \eqref{eqn79} is then defined as
\begin{equation}
\begin{aligned}
\mathcal{H} 
&= Q_{1}\left(S_H-S^{*}_{H}-S^{*}_{H}\ln\left(\frac{S_{H}}{S^{*}_{H}}\right)\right) 
+Q_{2}\left(E_H-E^{*}_{H}-E^{*}_{H}\ln\left(\frac{E_{H}}{E^{*}_{H}}\right)\right) \\
&\quad +Q_{3}\left(I_H-I^{*}_{H}-I^{*}_{H}\ln\left(\frac{I_{H}}{I^{*}_{H}}\right)\right)  
+Q_{4}\left(R_H-R^{*}_{H}-R^{*}_{H}\ln\left(\frac{R_{H}}{R^{*}_{H}}\right)\right) \\
&\quad +Q_{5}\left(S_F-S^{*}_{F}-S^{*}_{F}\ln\left(\frac{S_{F}}{S^{*}_{F}}\right)\right)  
+Q_{6}\left(E_F-S^{*}_{F}-E^{*}_{F}\ln\left(\frac{E_{F}}{E^{*}_{F}}\right)\right) \\
&\quad +Q_{7}\left(I_F-I^{*}_{F}-I^{*}_{F}\ln\left(\frac{I_{F}}{I^{*}_{F}}\right)\right)  
+Q_{8}\left(S_D-S^{*}_{D}-S^{*}_{D}\ln\left(\frac{S_{D}}{S^{*}_{D}}\right)\right) \\
&\quad +Q_{9}\left(E_D-E^{*}_{D}-E^{*}_{D}\ln\left(\frac{E_{D}}{E^{*}_{D}}\right)\right)  
+Q_{10}\left(I_D-I^{*}_{D}-I^{*}_{D}\ln\left(\frac{I_{D}}{I^{*}_{D}}\right)\right) \\
&\quad +Q_{11}\left(R_D-S^{*}_{D}-R^{*}_{D}\ln\left(\frac{R_{D}}{R^{*}_{D}}\right)\right)  
+Q_{12}\left(M-M^{*}-M^{*}\ln\left(\frac{M}{M^{*}}\right)\right).
\end{aligned}
\label{H}
\end{equation}
The  time derivative of equation $\left(\ref{H} \right)$  
leads to
\begin{equation}
\begin{aligned}
\dfrac{d\mathcal{H}}{dt} &= Q_{1}\left(1- \dfrac{S^{*}_{H}}{S_H}
\right)\dfrac{dS_H}{dt} +Q_{2}\left(1-\dfrac{E^{*}_{H}}{E_H}\right)
\dfrac{dE_H}{dt}  +Q_{3}\left(1-\dfrac{I^{*}_{H}}{I_H}\right) \dfrac{dI_H}{dt}\\
&\quad +Q_{4}\left(1-\dfrac{R^{*}_{H}}{R_H}\right) \dfrac{dR_H}{dt} 
+Q_{5}\left(1- \dfrac{S^{*}_{F}}{S_F}\right)\dfrac{dS_F}{dt}  
+Q_{6}\left(1-\dfrac{E^{*}_{F}}{E_F}\right)\dfrac{dE_F}{dt} \\
&\quad +Q_{7}\left(1-\dfrac{I^{*}_{F}}{I_F}\right) \dfrac{dI_F}{dt}  
+Q_{8}\left(1- \dfrac{S^{*}_{D}}{S_D}\right)\dfrac{dS_D}{dt}  
+Q_{9}\left(1-\dfrac{E^{*}_{D}}{E_D}\right)\dfrac{dE_D}{dt} \\
&\quad +Q_{10}\left(1-\dfrac{I^{*}_{D}}{I_D}\right) \dfrac{dI_D}{dt}  
+Q_{11}\left(1-\dfrac{R^{*}_{D}}{R_D}\right) \dfrac{dR_D}{dt}  
+Q_{12}\left(1-\dfrac{M^{*}}{M}\right) \dfrac{dM}{dt}.
\end{aligned}
\label{dH}
\end{equation}
Substituting  equation \eqref{eqn79} into  equation $\left(\ref{dH}\right)$ yields
\begin{equation}
\begin{aligned}
\dfrac{d\mathcal{H}}{dt} 
&= Q_{1}\left(1- \dfrac{S^{*}_{H}}{S_H}\right)
\left[\theta_{1}+\beta_{3}R_{H}-\mu_{1} S_{H}
-\left(1-\eta_{1}\right)f_{1} S_{H}\right]\\
&\quad +Q_{2}\left(1-\dfrac{E^{*}_{H}}{E_H}\right)\left[\left(1-\eta_{1}\right) 
f_{1}S_{H}- \eta_{12}E_{H}\right] +Q_{3}\left(1-\dfrac{I^{*}_{H}}{I_H}\right) 
\left[\beta_{1}E_{H}-\eta_{2} I_{H}\right]\\
&\quad  +Q_{4}\left(1-\dfrac{R^{*}_{H}}{R_H}\right) 
\left[\eta_{3} E_{H}-\eta_{4} R_{H}\right]
+Q_{5}\left(1- \dfrac{S^{*}_{F}}{S_F}\right)\left[\theta_{2}-f_2S_{F}-\mu_{2}S_{F}\right]\\
&\quad  +Q_{6}\left(1-\dfrac{E^{*}_{F}}{E_F}\right)\left[f_{2} S_{F}-\eta_{5}E_{F}\right] 
+Q_{7}\left(1-\dfrac{I^{*}_{F}}{I_F}\right) \left[\gamma E_{F}-\eta_{6}I_{F}\right]\\
&\quad   +Q_{8}\left(1- \dfrac{S^{*}_{D}}{S_D}\right)\left[\theta_{3}-\mu_{3}S_{D}
-\left(1-\eta_{7}\right)f_{3}S_{D}+\gamma_{3}R_{D}\right]\\
&\quad +Q_{9}\left(1-\dfrac{E^{*}_{D}}{E_D}\right)
\left[\left(1-\eta_{7}\right) f_{3}S_{D}-\eta_{8} E_{D}\right]  
+Q_{10}\left(1-\dfrac{I^{*}_{D}}{I_D}\right) \left[\gamma_{1}E_{D}
-\eta_{9} I_{D}\right] \\&\quad  +Q_{11}\left(1-\dfrac{R^{*}_{D}}{R_D}\right) 
\left[\eta_{10}E_{D}-\eta_{11}R_{D}\right] 
+Q_{12}\left(1-\dfrac{M^{*}}{M}\right) \left[f_{4}-\mu_4M	\right],
\end{aligned}
\label{dE}
\end{equation}
where
\begin{equation*}
\begin{aligned}
\eta_{1} &= u_{1} + u_{3},\; 
\eta_{2} = \sigma_{1} + \mu_{1},\;
f_{1} = \tau_{1} I_{F} + \tau_{2} I_{D} + \tau_{3} \lambda(M), \\
\eta_{3} &=\beta_{2}+u_4,\; \eta_{4}=\beta_{3}+\mu_{1},\;
f_2 =\kappa_{1}I_{F} + \kappa_{2}I_{D} + \kappa_{3} \lambda(M),\\
\eta_5&=\mu_{2}+\gamma,\;\eta_{6}=\mu_{2}+\sigma_{2},\;
f_{3}=\left(\frac{\psi_{1}I_{F}}{1+\rho_{1}} 
+ \frac{\psi_{2}I_{D}}{1+\rho_{2}} 
+ \frac{\psi_{3}}{1+\rho_{3}}\lambda(M)\right),\\ 
\eta_{7}&=u_1+u_2,\;\eta_{8}=\mu_{3}+\gamma_{1}+\gamma_{2}
+u_{4},\;\eta_{9}=\mu_{3}+\sigma_{3},\;\eta_{10}=\gamma_{2}+u_4\\
\eta_{11}&=\mu_{3}+\gamma_{3},\;
f_{4}=\nu_1I_H+\nu_2I_F+\nu_3I_D,\;
\lambda(M) = \frac{M}{M+C},\\
\eta_{12}&=\mu_{1}+\beta_{1}+\beta_{2}+u_{4}.
\end{aligned}
\end{equation*}
At the rabies disease  endemic equilibrium point $E^{*}$ 
equation $\left(\ref{dE}\right)$ gives
\begin{equation}
\begin{aligned}
\dfrac{d\mathcal{H}}{dt} 
&= Q_{1}\left(1- \dfrac{S^{*}_{H}}{S_H}\right)\left[\left(1-\eta_{1}\right)
f^{*}_{1} S^{*}_{H}+\mu_{1} S^{*}_{H}-\beta_{3}R^{*}_{H}
+\beta_{3}R_{H}-\mu_{1} S_{H}-\left(1-\eta_{1}\right)f_{1} S_{H}\right]\\
&\quad +Q_{2}\left(1-\dfrac{E^{*}_{H}}{E_H}\right)\left[\left(1-\eta_{1}\right) 
f_{1}S_{H}- \dfrac{\left(1-\eta_{1}\right)f^{*}_{1}S^{*}_{H}E_{H}}{E^{*}_{H}}\right] 
+Q_{3}\left(1-\dfrac{I^{*}_{H}}{I_H}\right) \left[\beta_{1}E_{H}
-\dfrac{\beta_{1}E^{*}_{H}I_{H}}{I^{*}_{H}} \right]\\
&\quad  +Q_{4}\left(1-\dfrac{R^{*}_{H}}{R_H}\right) \left[\eta_{3} 
E_{H}-\dfrac{\eta_{3}E^{*}_{H}R_{H}}{R^{*}_{H}}\right]
+Q_{5}\left(1- \dfrac{S^{*}_{F}}{S_F}\right)\left[f^{*}_2S^{*}_{F}
+\mu_{2}S^{*}_{F}-f_2S_{F}-\mu_{2}S_{F}\right]\\
&\quad  +Q_{6}\left(1-\dfrac{E^{*}_{F}}{E_F}\right)\left[f_{2} 
S_{F}-\dfrac{f^{*}_{2}S^{*}_{F} E_{F}}{E^{*}_{F}}\right] 
+Q_{7}\left(1-\dfrac{I^{*}_{F}}{I_F}\right) \left[
\gamma E_{F}-\dfrac{\gamma E^{*}_{F}I_{F}}{I^{*}_{F}}\right]\\
&\quad   +Q_{8}\left(1-\dfrac{S^{*}_{D}}{S_D}\right)\left[
\left(1-\eta_{7}\right)f^{*}_{3}S^{*}_{D}+\mu_{3}S^{*}_{D}
-\gamma_{3}R^{*}_{D}-\left(1-\eta_{7}\right)f_{3}S_{D}
-\mu_{3}S_{D}+\gamma_{3}R_{D}\right]\\
&\quad +Q_{9}\left(1-\dfrac{E^{*}_{D}}{E_D}\right)\left[
\left(1-\eta_{7}\right) f_{3}S_{D}-\dfrac{\left(1-\eta_{7}\right)
f^{*}_{3}S^{*}_{D}E_{D}}{E^{*}_{D}} \right]  
+Q_{10}\left(1-\dfrac{I^{*}_{D}}{I_D}\right) 
\left[\gamma_{1}E_{D}-\dfrac{\gamma_{1} E^{*}_{D}I_{D}}{I^{*}_{D}} \right]\\
&\quad  +Q_{11}\left(1-\dfrac{R^{*}_{D}}{R_D}\right) \left[\eta_{10}E_{D}
-\dfrac{\eta_{10}E^{*}_{D}R_{D}}{R^{*}_{D}}\right] 
+Q_{12}\left(1-\dfrac{M^{*}}{M}\right) \left[f_{4}-\dfrac{f^{*}_{4}M}{M^{*}}\right].
\end{aligned}
\label{L1}
\end{equation}
Expanding $\left(\ref{L1}\right)$, we obtain that
\begin{equation}
\begin{aligned}
\dfrac{d\mathcal{H}}{dt} 
&= -Q_{1}\mu_{1} S_{H}\left(1- \dfrac{S^{*}_{H}}{S_H}\right)^{2}
-Q_{5}\mu_{2} S_{F}\left(1- \dfrac{S^{*}_{F}}{S_F}\right)^{2}-Q_{8}
\mu_{3} S_{D}\left(1- \dfrac{S^{*}_{D}}{S_D}\right)^{2}\\
&\quad + \left(1-\eta_{1}\right)Q_{1}f_{1} S_{H}
\left(1- \dfrac{S^{*}_{H}}{S_H}\right)\left(1- 
\dfrac{f^{*}_{1}S^{*}_{H}}{f_{1}S_H}\right)
+Q_{1}\beta_{1}R_{H}\left(1- \dfrac{S^{*}_{H}}{S_H}\right)\left(1- \dfrac{R^{*}_{H}}{R_H}\right)\\
&\quad +Q_{2}\left(1-\eta_{1}\right) f_{1}S_{H}\left(1-\dfrac{E^{*}_{H}}{E_H}\right)
\left(1- \dfrac{f^{*}_{1}S^{*}_{H}E_{H}}{f_{1}S_{H}E^{*}_{H}}\right) 
+Q_{3}\beta_{1}E_{H}\left(1-\dfrac{I^{*}_{H}}{I_H}\right) 
\left(1-\dfrac{E^{*}_{H}I_{H}}{I^{*}_{H}E_{H}} \right)\\
&\quad  +Q_{4}\eta_{3} E_{H}\left(1-\dfrac{R^{*}_{H}}{R_H}\right) 
\left(1-\dfrac{E^{*}_{H}R_{H}}{R^{*}_{H}E_{H}}\right)
+Q_{5}f_{2}S_F\left(1- \dfrac{S^{*}_{F}}{S_F}\right)
\left(1- \dfrac{f^{*}_{2}S^{*}_{F}}{f_{2}S_F}\right)\\
&\quad  +Q_{6}f_{2} S_{F}\left(1-\dfrac{E^{*}_{F}}{E_F}\right)
\left(1-\dfrac{f^{*}_{2}S^{*}_{F} E_{F}}{E^{*}_{F}S_{F}f_{2}}\right) 
+Q_{7}\gamma E_{F}\left(1-\dfrac{I^{*}_{F}}{I_F}\right) 
\left(1-\dfrac{ E^{*}_{F}I_{F}}{I^{*}_{F}E_{F}}\right)\\
&\quad +\left(1-\eta_{7}\right)Q_{8}f_{3} S_{D}\left(1- \dfrac{S^{*}_{D}}{S_D}\right)
\left(1- \dfrac{f^{*}_{3}S^{*}_{D}}{f_{3}S_D}\right)
+Q_{8}\gamma_{3}R_{D}\left(1- \dfrac{S^{*}_{D}}{S_D}\right)
\left(1- \dfrac{R^{*}_{D}}{R_D}\right)\\
&\quad +Q_{9}\left(1-\eta_{7}\right) f_{3}S_{D}\left(1-\dfrac{E^{*}_{D}}{E_D}\right)
\left(1-\dfrac{f^{*}_{3}S^{*}_{D}E_{D}}{E^{*}_{D} f_{3}S_{D}} \right)  
+Q_{10}\gamma_{1}E_{D}\left(1-\dfrac{I^{*}_{D}}{I_D}\right) 
\left(1-\dfrac{ E^{*}_{D}I_{D}}{I^{*}_{D}E_{D}} \right) \\
&\quad  +Q_{11}\eta_{10}E_{D}\left(1-\dfrac{R^{*}_{D}}{R_D}\right) 
\left(1-\dfrac{E^{*}_{D}R_{D}}{R^{*}_{D}E_{D}}\right) 
+Q_{12}f_{4}\left(1-\dfrac{M^{*}}{M}\right) 
\left(1-\dfrac{f^{*}_{4}M}{M^{*}f_{4}}\right).
\end{aligned}
\label{dE**}
\end{equation}
To simplify equation $\left(\ref{dE**}\right)$, let
\begin{equation*}
\begin{aligned}
a = \dfrac{S_{H}}{S^{*}_{H}}, \;
b = \dfrac{E_{H}}{E^{*}_{H}}, \;
c = \dfrac{I_{H}}{I^{*}_{H}}, \;
d = \dfrac{R_{H}}{R^{*}_{H}}, \;
e = \dfrac{S_{F}}{S^{*}_{F}}, \;
f = \dfrac{E_{F}}{E^{*}_{F}}, \;
g = \dfrac{I_{F}}{I^{*}_{F}}, \;
h = \dfrac{S_{D}}{S^{*}_{D}}, \\
m = \dfrac{E_{D}}{E^{*}_{D}}, \;
n = \dfrac{I_{D}}{I^{*}_{D}}, \;
p = \dfrac{R_{D}}{R^{*}_{D}}, \;
q = \dfrac{M}{M^{*}}, \;
k = \dfrac{f_{1}}{f^{*}_{1}}, \;
l = \dfrac{f_{2}}{f^{*}_{2}}, \;
t = \dfrac{f_{3}}{f^{*}_{3}}, \;\text{and}
s = \dfrac{f_{4}}{f^{*}_{4}}.
\end{aligned}
\end{equation*}
Upon solving $\left(\ref{dE**}\right)$, 
the implication equation yields to
\begin{equation}
\begin{aligned}
\dfrac{d\mathcal{H}}{dt} 
&= -Q_{1}\mu_{1} S_{H}\left(1- \dfrac{1}{a}\right)^{2}-Q_{5}\mu_{2} 
S_{F}\left(1- \dfrac{1}{e}\right)^{2}-Q_{8}\mu_{3} S_{D}\left(1- \dfrac{1}{h}\right)^{2}\\
&\quad + \left(1-\eta_{1}\right)Q_{1}f_{1} S_{H}\left(1- \dfrac{1}{a}\right)
\left(1- \dfrac{1}{ak}\right)+Q_{1}\beta_{1}R_{H}\left(1- \dfrac{1}{a}\right)\left(1- \dfrac{1}{d}\right)\\
&\quad +Q_{2}\left(1-\eta_{1}\right) f_{1}S_{H}\left(1-\dfrac{1}{b}\right)\left(1- \dfrac{b}{ka}\right)
+Q_{3}\beta_{1}E_{H}\left(1-\dfrac{1}{c}\right) \left(1-\dfrac{c}{b} \right)\\
&\quad  +Q_{4}\eta_{3} E_{H}\left(1-\dfrac{1}{d}\right) \left(1-\dfrac{d}{b}\right)
+Q_{5}f_{2}S_F\left(1- \dfrac{1}{e}\right)\left(1- \dfrac{1}{el}\right)\\
&\quad  +Q_{6}f_{2} S_{F}\left(1-\dfrac{1}{f}\right)\left(1-\dfrac{f}{el}\right) 
+Q_{7}\gamma E_{F}\left(1-\dfrac{1}{g}\right) \left(1-\dfrac{g}{f}\right)\\
&\quad   +\left(1-\eta_{7}\right)Q_{8}f_{3} S_{D}\left(1- \dfrac{1}{h}\right)
\left(1- \dfrac{1}{ht}\right)+Q_{8}\gamma_{3}R_{D}
\left(1- \dfrac{1}{h}\right)\left(1- \dfrac{1}{p}\right)\\
&\quad +Q_{9}\left(1-\eta_{7}\right) f_{3}S_{D}
\left(1-\dfrac{1}{m}\right)\left(1-\dfrac{m}{ht} \right)  
+Q_{10}\gamma_{1}E_{D}\left(1-\dfrac{1}{n}\right) \left(1-\dfrac{n}{m} \right)\\
&\quad  +Q_{11}\eta_{10}E_{D}\left(1-\dfrac{1}{p}\right) \left(1-\dfrac{p}{m}\right) 
+Q_{12}f_{4}\left(1-\dfrac{1}{q}\right) \left(1-\dfrac{q}{s}\right).
\end{aligned}
\label{dEE}
\end{equation}
If $Q_{i}= 1$ for $i=1\le i\le 12$, whereas the coefficients of $ak$, $el$, 
and $ht$ are set to zero, then equation \eqref{dEE} becomes
\begin{equation}
\begin{aligned}
\dfrac{d\mathcal{H}}{dt} 
&= -\mu_{1} S_{H}\left(1- \dfrac{1}{a}\right)^{2}-\mu_{2} 
S_{F}\left(1- \dfrac{1}{e}\right)^{2}-\mu_{3} S_{D}\left(1- \dfrac{1}{h}\right)^{2}\\
&\quad +\beta_{1}R_{H}\left(1- \dfrac{1}{d}-\dfrac{1}{a}+\dfrac{1}{ad}\right) 
+\beta_{1}E_{H} \left(1-\dfrac{c}{b}-\dfrac{1}{c}+\dfrac{1}{b} \right)\\
&\quad  +\eta_{3} E_{H}\left(1-\dfrac{d}{b}-\dfrac{1}{d}+\dfrac{1}{b}\right) 
+\gamma E_{F}\left(1- \dfrac{g}{f}-\dfrac{1}{g}+\dfrac{1}{f}\right)\\
&\quad +\gamma_{3}R_{D}\left(1- \dfrac{1}{p}-\dfrac{1}{h}+\dfrac{1}{ph}\right)
+ \gamma_{1}E_{D}\left(1-\dfrac{n}{m}-\dfrac{1}{n}+\dfrac{1}{m}\right)\\
&\quad  +\eta_{10}E_{D}\left(1-\dfrac{p}{m}-\dfrac{1}{p}+\dfrac{1}{m}\right)  
+f_{4}\left(1-\dfrac{q}{s}-\dfrac{1}{q}+\dfrac{1}{s}\right). 
\end{aligned}
\label{dEE2}
\end{equation}
From equation \eqref{dEE2} we have  
\begin{equation}
\begin{aligned}
H_{1}= -\mu_{1} S_{H}\left(1- \dfrac{1}{a}\right)^{2}-\mu_{2} S_{F}
\left(1- \dfrac{1}{e}\right)^{2}-\mu_{3} S_{D}\left(1- \dfrac{1}{h}\right)^{2}
\end{aligned}
\label{H1}
\end{equation}
and 
\begin{equation}
\begin{aligned}
H_{2}&=\beta_{1}R_{H}\left(1- \dfrac{1}{d}-\dfrac{1}{a}+\dfrac{1}{ad}\right) 
+\beta_{1}E_{H} \left(1-\dfrac{c}{b}-\dfrac{1}{c}+\dfrac{1}{b} \right)\\
&\quad  +\eta_{3} E_{H}\left(1-\dfrac{d}{b}-\dfrac{1}{d}+\dfrac{1}{b}\right) 
+\gamma E_{F}\left(1- \dfrac{g}{f}-\dfrac{1}{g}+\dfrac{1}{f}\right)\\
&\quad  
+\gamma_{3}R_{D}\left(1- \dfrac{1}{p}-\dfrac{1}{h}+\dfrac{1}{ph}\right)
+ \gamma_{1}E_{D}\left(1-\dfrac{n}{m}-\dfrac{1}{n}+\dfrac{1}{m}\right)  \\
&\quad  +\eta_{10}E_{D}\left(1-\dfrac{p}{m}-\dfrac{1}{p}+\dfrac{1}{m}\right)  
+f_{4}\left(1-\dfrac{q}{s}-\dfrac{1}{q}+\dfrac{1}{s}\right). 
\end{aligned}
\label{H2}
\end{equation}

\begin{lemma}
\label{def1}	
If $\delta(y) = 1 - y + \ln y$, then $\delta(y) \leq 0$  where  
$1 - y \leq -\ln y$ if and only if $y > 0$ for all $y \in \mathbb{R}^{12+}$.
\end{lemma}

From equation $\left(\ref{H2}\right)$ we have
\begin{equation}
\begin{aligned}
1- \dfrac{1}{d}-\dfrac{1}{a}+\dfrac{1}{ad}
=\left(1-\dfrac{1}{d}\right)+\left(1-\dfrac{1}{a}\right)-\left(1-\dfrac{1}{ad}\right).
\end{aligned}
\label{def2}
\end{equation}
Equation $\left(\ref{def2}\right)$ is then written using Lemma~\ref{def1} 
and the concept of geometric mean as follows:
\begin{equation}
\begin{aligned}
\left(1-\dfrac{1}{d}\right)+\left(1-\dfrac{1}{a}\right)-\left(1-\dfrac{1}{ad}\right)
\le -\ln\left(\dfrac{1}{d}\right)-ln\left(\dfrac{1}{a}\right)+\ln\left(\dfrac{1}{ad}\right)\\
\le\ln\left(a\times d\times\dfrac{1}{ad} \right)=\ln\left(1\right)=0.
\end{aligned}
\label{def22}
\end{equation}
Following similar procedures in $\left(\ref{def22}\right)$, we get
\begin{equation}
\begin{aligned}
1-\dfrac{c}{b}-\dfrac{1}{c}+\dfrac{1}{b}\le 0,\;1-
\dfrac{p}{m}-\dfrac{1}{p}+\dfrac{1}{m}\le 0,\;
1-\dfrac{d}{b}-\dfrac{1}{d}+\dfrac{1}{b}\le 0.
\end{aligned}
\end{equation}
Therefore, it follows that $\dfrac{d\mathcal{H}}{dt}\le0$ 
and $\dfrac{d\mathcal{H}}{dt}=0$ only at the endemic equilibrium point 
$\left(E^{*}\right)$. Hence, using Lasalle's extension to Lyapunov's  method, 
the limit set of each solution is obtained in the largest invariant  
set for which $S^{*}_{H}=S_{H}$,$E^{*}_{H}=E_{H}$, $I^{*}_{H}=I_{H}$, 
$R^{*}_{H}=R_{H}$, $S^{*}_{F}=S_{F}$, $E^{*}_{F}=E_{F}$, $S^{*}_{D}=S_{D}$,  
$E^{*}_{D}=E_{D}$,$I^{*}_{D}=I_{D}$, $R^{*}_{D}=R_{D}$, $M^{*}=M$,  
which is the singleton $\left\{E^{*}\right\}$. Hence, the rabies disease endemic  
equilibrium point $\left(E^{*}\right)$  of the model system  \eqref{eqn79} 
is globally  asymptotically  stable  on a given set  
when  $\mathcal{ R}_{e}>1$.  
\end{proof}

% ---------------------------------------

\section*{Data Availability}

The data supporting this study were sourced from existing literature.

% ---------------------------------------------  

\section*{Funding Statement}

Torres is supported by the Portuguese Foundation 
for Science and Technology (FCT) and CIDMA, 
project UID/04106/2025.

% ---------------------------------------------  

\section*{Conflicts of Interest}

There are no known financial or personal conflicts of interest disclosed 
by the authors that could have influenced the outcome of this study.

% ---------------------------------------------  

\section*{Acknowledgements}

We would like to acknowledge The Nelson Mandela African Institution 
of Science and Technology (NM-AIST) and the College of Business Education 
(CBE) for providing a conducive environment during the writing of this manuscript.

% ---------------------------------------------  

% ---------------------------------------------  

\end{document}